\newtheorem{theorem}{Theorem}[section]
\newtheorem{corollary}{Corollary}[theorem]
\newtheorem{lemma}[theorem]{Lemma}
\newtheorem{conjecture}[theorem]{Conjecture}
\newtheorem{definition}[theorem]{Definition}
\theoremstyle{definition}
\newtheorem{example}[theorem]{Example}
\begin{document}
\title{Distribution of genus among numerical semigroups with fixed Frobenius number}
\author{Deepesh Singhal}
\address{Hong Kong University of Science and Technology}
\email{dsinghal@connect.ust.hk}

\begin{abstract}
    A numerical semigroup is a sub-semigroup of the natural numbers that has a finite complement. The size of its complement is called the genus and the largest number in the complement is called its Frobenius number. We consider the set of numerical semigroups with a fixed Frobenius number $f$ and analyse their genus. We find the asymptotic distribution of genus in this set of numerical semigroups and show that it is a product of a Gaussian and a power series. We show that almost all numerical semigroups with Frobenius number $f$ have genus close to $\frac{3f}{4}$.
    We denote the number of numerical semigroups of Frobenius number $f$ by $N(f)$. While $N(f)$ is not monotonic we prove that $N(f)<N(f+2)$ for every $f$.
\end{abstract}

\maketitle

\section{Introduction}
A numerical semigroup is a subset of natural numbers that contains $0$, is closed under addition and has a finite complement with respect to the natural numbers.
The numbers that are in the complement of a numerical semigroup $S$ are called its gaps.
The number of gaps is called the genus, it is denoted by $g(S)$.
The largest gap is called the Frobenius number, it is denoted by $f(S)$.
The smallest non-zero element of $S$ is called its multiplicity and is denoted by $m(S)$.

For a given $f$ there are only finitely many numerical semigroups with Frobenius number $f$ (at most $2^{f-1}$).
Denote the number by $N(f)$.
Backelin in \cite{Backelin} proves the following theorem, here $\overline{f}=\left\lfloor \frac{f-1}{2}\right\rfloor$.
\begin{theorem}\label{NS with f}\cite[Proposition 1]{Backelin}
The following limits exist and are positive (we denote the values by $c_1$, $c_2$)
$$\lim_{f\text{ odd}} \frac{N(f)}{2^{\overline{f}}} =c_1$$
$$\lim_{f\text{ even}} \frac{N(f)}{2^{\overline{f}}} =c_2.$$
\end{theorem}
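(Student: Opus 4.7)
My approach is to split every numerical semigroup $S$ with Frobenius number $f$ into its small part $L = S \cap \{1, \ldots, \overline{f}\}$ and its large part $U = S \cap \{\overline{f}+1, \ldots, f-1\}$. When $f$ is even, $f/2 = \overline{f}+1$ is automatically a gap since $(f/2)+(f/2) = f \notin S$, so in either parity $U$ is drawn from a set of exactly $\overline{f}$ candidate positions. The decisive observation is that any two elements of the large range sum to strictly more than $f$, so the only closure constraints on $(L, U)$ come from pairs in $L \times L$ and from mixed pairs in $L \times U$.

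The lower bound $N(f) \ge 2^{\overline{f}}$ is immediate: take $L = \emptyset$; then every subset of the $\overline{f}$-element large range yields a valid numerical semigroup with Frobenius $f$, because closure is automatic and $f$ remains a gap. Hence, if the limits $c_1, c_2$ exist they are each at least $1$, and in particular positive.

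For the existence of the limits I would write $N(f) = \sum_L E(L, f)$, where $E(L, f)$ counts large parts compatible with $L$. Compatibility amounts to $L$ being closed under addition within $[1, \overline{f}]$, together with: (i) $f - l \notin U$ for every $l \in L$; (ii) $l + l' \in U$ whenever $l, l' \in L$ and $l + l'$ lies in the large range; (iii) $u + l \in U$ whenever $u \in U$, $l \in L$, and $u + l < f$. I would reparameterize valid small parts so that they stabilize as $\overline{f} \to \infty$: each valid $L$ is the truncation $T \cap [1, \overline{f}]$ of a ``germ'' numerical semigroup $T$, which for large $\overline{f}$ is independent of $f$. A combinatorial analysis of the chain structure forced on $U$ by (iii) should then show $E(L, f)/2^{\overline{f}}$ tends to a value $\lambda(T)$ depending only on $T$ and on the parity of $f$.

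The main obstacle is justifying the interchange of $\lim_{f \to \infty}$ with $\sum_T$, that is, a dominated convergence argument. For this I would derive a summable tail bound $E(L, f)/2^{\overline{f}} \le \mu(T)$ with $\sum_T \mu(T) < \infty$: condition~(i) costs a factor of $2^{|L|}$, condition~(iii) forces forward $L$-orbits inside $U$ which further cuts the count, and the number of germs $T$ of each complexity is well controlled since germs are themselves numerical semigroups of smaller Frobenius number. Once dominated convergence applies, $N(f)/2^{\overline{f}} \to \sum_T \lambda(T)$, which we name $c_1$ for odd $f$ and $c_2$ for even $f$. The parity split reflects the asymmetry introduced by the automatic exclusion of $f/2$ when $f$ is even.
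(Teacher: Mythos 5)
The paper does not reprove this statement---it is quoted from Backelin---but the machinery built in Sections 3--4 (the sets $Y(S)$, $Z(S)$ and Theorems \ref{In terms of Y} and \ref{depth 3 count}) is essentially a correct execution of the program you sketch, and comparing the two exposes a genuine flaw in your version. Your decomposition into a small part $L=S\cap[1,\overline{f}]$ and a large part $U$, the observation that in either parity $U$ is drawn from exactly $\overline{f}$ candidate positions whose pairwise sums exceed $f$, and the lower bound $N(f)\ge 2^{\overline{f}}$ via $L=\emptyset$ are all correct (the last matches Theorem \ref{depth 2 count}). The problem is your reparameterization of the small parts: you index the nonempty $L$ by ``germs'' $T$, fixed numerical semigroups independent of $f$, with $L=T\cap[1,\overline{f}]$. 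Such an $L$ has bounded minimum, so the corresponding semigroups have bounded multiplicity $m$ while $f\to\infty$. For any such class your condition (iii) forces $U$ to be closed under adding $m$ below $f$, which collapses the $\overline{f}$ free choices into roughly $m$ chains of length roughly $f/(2m)$; hence $E(L,f)$ grows only polynomially in $f$ and $E(L,f)/2^{\overline{f}}\to 0$. Your dominated-convergence scheme would therefore output $c_1=c_2=1$, which is false: already $N(7)/2^{3}=11/8$, and the paper's series expression for $c_1$ is $1$ plus strictly positive terms. The classes that actually carry the excess mass have multiplicity near $f/2$, so their small parts shift with $f$ and are captured by no fixed bottom-anchored germ. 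The correct move is to anchor at the top, as the paper does with $Y(S)=\{t:\overline{f}-t\in S\}$ together with $Z(S)$: for fixed finite $Y,Z$ the ratio $N(Y,Z,f)/2^{\overline{f}}$ stabilizes at the positive constant $2^{-Max(Y)-1-\alpha}$ by Theorem \ref{depth 3 count}, and summing over $Y,Z$ produces the constants.

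Even after correcting the anchoring, the interchange of limit and sum that you rightly identify as the main obstacle is not a routine estimate. One must show that semigroups whose multiplicity is far from $f/2$ (equivalently, with $Max(Y)$ large) form a uniformly small fraction of $2^{\overline{f}}$; this is exactly Backelin's Proposition 2 (Theorem \ref{m near f/2}, restated as Theorem \ref{In terms of Y}), and its proof requires a quantitative bound of the shape $N_{mul}(m,f)\le \tfrac14\,2^{\overline{f}}(11/12)^{\overline{f}-m}$. Your proposed tail bound (``condition (i) costs a factor of $2^{|L|}$,'' etc.) is aimed at the wrong index set and does not supply this. So the proposal as written has two gaps: a parameterization that would compute the wrong constants, and a missing uniform tail bound that is the substantive content of the cited result.
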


In \cite{N.S. with Frob}, the authors study the the set of numerical semigroups with Frobenius number $f$ and give an algorithm to compute it.
In \cite{N.S. with Frob and Mult}, the authors do the same for the set of numerical semigroups with a given Frobenius number and multiplicity.
Both these papers use a similar strategy of partitioning the respective sets into equivalence classes such that numerical semigroups belong to the same class if they have the same
$$X(S)=\left\{x\in S\mid 1\leq x< \frac{f}{2}\right\}.$$

In this paper we will study how these equivalence classes change when we vary $f$.
We will thus give a detailed description of numerical semigroups for which $f(S)<3m(S)$ and it will be shown that almost all numerical semigroups satisfy this property.
We will use this to analyse the distribution of genus among numerical semigroups with Frobenius number $f$.
In Section \ref{average section} we show that
\begin{theorem}\label{average of genus}
Among numerical semigroups with Frobenius number $f$ the average value of genus is $\frac{3f}{4}+o(f)$.
\end{theorem}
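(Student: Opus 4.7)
The plan is to first find a decomposition of $g(S)$ that separates a deterministic main term from a combinatorial remainder. Since $f+1,f+2,\ldots$ all lie in $S$ and since any $x\in S\cap[1,f-1]$ forces $f-x$ to be a gap, the elements of $[1,f-1]$ pair up as $\{x,f-x\}$ with $x<f/2$, and each pair contains at most one semigroup element; so it contributes either one gap or two. Writing $h(S)$ for the number of pairs in which both elements are gaps, a direct count gives
\[
g(S)=\overline{f}+h(S)+\varepsilon_f,
\]
where $\varepsilon_f\in\{1,2\}$ accounts for $f$ itself and, when $f$ is even, for the forced gap at $f/2$. The theorem therefore reduces to the assertion
\[
\frac{1}{N(f)}\sum_{S}h(S)=\frac{f}{4}+o(f).
\]

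To evaluate this sum I would use the partition of the Frobenius-$f$ semigroups into the equivalence classes $\{S:X(S)=X\}$ referenced in the introduction. The announced structural result that almost all such semigroups satisfy $f(S)<3m(S)$, combined with the trivial bound $g(S)\le f$ and the asymptotics $N(f)\sim c_i\,2^{\overline{f}}$ from Theorem~\ref{NS with f}, shows that the exceptional semigroups contribute $o(f)$ to the average after normalization, so one may work entirely inside the good class. For $S$ in the good class, every element of $X(S)$ lies in $(f/3,f/2)$, and the upper half $(f/2,f)$ decomposes explicitly into forced gaps $\{f-x:x\in X(S)\}$, forced semigroup elements $\{x_1+x_2:x_i\in X(S),\,x_1+x_2<f\}$, and remaining ``free'' positions, each of which may a priori be included in $S$ or not.

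With this decomposition $h(S)$ equals precisely the number of free positions that turn out to be gaps. Summing over a class with fixed $X$ and then over admissible $X$, the average of $h(S)$ becomes a weighted average of the form (fraction of free positions that are gaps) $\times$ (number of free positions), whose heuristic value is half the expected number of free positions. The principal obstacle is that the free positions are not literally independent Bernoulli$(1/2)$ variables: any sum $x+y<f$ with $x\in X(S)$ and $y$ a free position must itself lie in $S$, inducing correlations among the free choices. Controlling these residual correlations, using the detailed description of the equivalence classes developed earlier in the paper, is the main technical step; once it is in place the expected number of free positions is shown to be $\tfrac{f}{2}+o(f)$ on average and each is a gap essentially half the time, yielding the desired $\tfrac{f}{4}+o(f)$.
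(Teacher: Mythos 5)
Your decomposition $g(S)=\overline{f}+h(S)+\varepsilon_f$ and your identification of $h(S)$ with the number of ``free'' positions in the upper half that end up being gaps is exactly the skeleton of the paper's argument (in the paper's notation $n(S)=\overline{f}-h(S)$ for odd $f$, and the claim becomes that the average of $n(S)$ is $\frac{f}{4}+o(f)$). However, the step you defer as ``the main technical step'' --- controlling the correlations among the free positions induced by sums $x+y<f$ with $x\in X(S)$ and $y$ in the upper half --- is precisely the content of the proof, and your proposal does not carry it out. As written, the assertion that ``each free position is a gap essentially half the time'' is unjustified, because which positions are free depends on which other positions are occupied, so you cannot yet average over the class $\{S:X(S)=X\}$.

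The paper resolves this by observing that the correlations are confined to a bounded window: if $m(S)>f/3$ then a sum $x+y<f$ with $x\in X(S)$ and $y>f/2$ forces $y\le f-m(S)$, so only the positions in $(f/2,\,f-m(S)]$ --- an interval of length $l(S)=\overline{f}-m(S)$, which is bounded by $L$ on a $(1-\epsilon)$-fraction of semigroups by Theorem \ref{In terms of Y} --- can create forcings. Conditioning on the trace $Z(S)$ of $S$ on that window determines the full set of forced elements ($2Y\cup W_1$ or $2Y\cup W_2$), and Lemmas \ref{f odd sufficient} and \ref{f even sufficient} show that the remaining positions are then genuinely independent: every choice of the free set $T$ yields a numerical semigroup. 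This gives the exact shifted-binomial count of Theorem \ref{depth 3 count}, whose mean is $\frac{\overline{f}}{2}+\frac{Max(Y)+1+\alpha}{2}-\beta=\frac{f}{4}+O_L(1)$, and the theorem follows by averaging over the classes and using the trivial bound $n(S)\le\overline{f}$ on the exceptional $\epsilon$-fraction. To complete your proof you would need to introduce this secondary conditioning (or an equivalent device); without it the heuristic ``half the free positions are gaps'' remains a heuristic.
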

In fact most numerical semigroups have genus close to this value, in Section \ref{average section} we also show that
\begin{theorem}\label{most genus near 3f/4}
For any $\epsilon>0$,
$$\lim_{f\to\infty} \frac{\#\{S\mid f(S)=f, |g(S)-\frac{3f}{4}|<f^{\frac{1}{2}+\epsilon}\}}{N(f)}=1.$$
\end{theorem}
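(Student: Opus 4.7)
The plan is to establish concentration via Chebyshev's inequality applied to the uniform distribution on numerical semigroups with Frobenius $f$. By Theorem \ref{average of genus} the mean $\mu_f := E[g(S)]$ satisfies $\mu_f = 3f/4 + o(f)$, so $|\mu_f - 3f/4| < f^{1/2+\epsilon}/2$ once $f$ is large. It therefore suffices to show that the variance $\sigma_f^2 := \mathrm{Var}(g(S))$ is $O(f)$; Chebyshev then gives
$$P\Bigl(|g(S) - \mu_f| \geq f^{1/2+\epsilon}/2\Bigr) \leq \frac{4\sigma_f^2}{f^{1+2\epsilon}} = O(f^{-2\epsilon}) \to 0,$$
and the theorem follows by the triangle inequality.

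To bound the variance, I would exploit the equivalence-class decomposition from \cite{N.S. with Frob} that is central to this paper: two semigroups are equivalent when they share the set $X(S) = S \cap [1, f/2)$. In the regime $f(S) < 3m(S)$, shown in the paper to comprise all but a vanishing fraction of semigroups with Frobenius $f$, each $X$-class is parametrized by a subset $Y \subseteq (f/2, f)$ that must contain the forced pair-sums $X + X$, avoid $(f - X) \cup \{f\}$, and satisfy mild internal coherence. Within a class the genus reads $g(S) = g_0(X) - |Y \cap \mathrm{free}(X)|$ for a constant $g_0(X)$, so by the law of total variance
$$\sigma_f^2 = E\bigl[\mathrm{Var}(g(S) \mid X)\bigr] + \mathrm{Var}\bigl(E[g(S) \mid X]\bigr).$$
The conditional variance is $O(f)$ because $|Y \cap \mathrm{free}(X)|$ behaves approximately as $\mathrm{Bin}(k(X), 1/2)$ with $k(X) \leq f$. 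The variance of the conditional mean $g_0(X) - k(X)/2$ is controlled by pushing through the same class-sum manipulations that underlie the proof of Theorem \ref{average of genus}, extracting a spread of order $\sqrt{f}$ across classes.

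The main obstacle is the coherence constraint among free positions: if $y \in \mathrm{free}(X)$ and $y + x \in \mathrm{free}(X)$ for some $x \in X$ with $y + x < f$, then placing $y$ in $Y$ forces $y + x \in Y$, so each class is not literally the cube $\{0,1\}^{k(X)}$. In the $f(S) < 3m(S)$ regime, however, $x > f/3$ pins $y + x$ into a narrow window of width $f/6$, so the active dependency graph has only $O(f)$ edges on average, and the distribution on free positions remains close enough to a product measure that the $O(f)$ variance bound survives. Combined with the already established negligibility of semigroups with $f \geq 3m$, this completes the Chebyshev argument.
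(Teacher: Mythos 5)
Your Chebyshev strategy has two quantitative gaps that the cited results do not fill. First, you invoke Theorem \ref{average of genus} to get $|\mu_f - 3f/4| < f^{1/2+\epsilon}/2$ for large $f$, but that theorem only gives an error of $o(f)$, which is far weaker than $o(f^{1/2+\epsilon})$: an error term like $f/\log f$ is consistent with $3f/4 + o(f)$ and would make your centering step fail outright. Second, the variance bound $\sigma_f^2 = O(f)$ is not justified. The law-of-total-variance decomposition requires controlling the contribution of the classes you discard (depth $\ge 4$, or $Max(Y)$ large); these have density $\delta_f \to 0$ by Theorem \ref{In terms of Y}, but with no stated rate, and their genus can deviate from $3f/4$ by order $f$, contributing up to $\delta_f \cdot O(f^2)$ to the variance. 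You would need $\delta_f = O(1/f)$, or an explicit tail bound on the class sizes (something like Backelin's exponential decay in $\overline{f}-m$, which the paper quotes only for multiplicities below $f/2$), and you do not establish this. Both gaps would force you to redo the class-by-class analysis quantitatively, at which point Chebyshev buys you nothing.

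The paper's proof avoids both issues by never computing a global mean or variance: by Theorem \ref{depth 3 count} the distribution of $n(S)$ within each fixed class $(Y,Z)$ is \emph{exactly} a shifted binomial ${\overline{f}-Max(Y)-1-\alpha \choose n-Max(Y)-1-\alpha+\beta}$, so De Moivre--Laplace gives concentration of $n(S)$ around $f/4$ within width $f^{1/2+\epsilon}$ inside each class separately; Theorem \ref{In terms of Y} then lets one discard an $\epsilon$-fraction of semigroups without needing any control over their genus at all, and $\epsilon \to 0$ finishes. Incidentally, your worry about a ``dependency graph'' among free positions is unnecessary in this framework: the free positions in Lemmas \ref{f odd sufficient} and \ref{f even sufficient} all lie in $[f-\overline{f}+Max(Y)+1, f-1]$, i.e.\ above $f - m(S)$, so the sum of any free position with any nonzero element of $S$ exceeds $f$, and the choices are genuinely independent --- the within-class law is an exact product measure, not merely close to one. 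If you want to salvage the Chebyshev route, you would need to restrict to the union of classes with $Max(Y)\le L$ first and prove the mean and variance bounds there, which is essentially reconstructing the paper's argument with extra steps.
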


We obtain the limiting distribution of genus among numerical semigroups with Frobenius number $f$ in Theorem \ref{Main Distribution}.
It is of the form of a Gaussian times a power series.

Let $n_g$ be the number of numerical semigroups of genus $g$.
Bras-Amorós conjectured in \cite{Maria} that the sequence $n_g$ is monotonic.
However when we count by Frobenius number, it is known that $N(f)$ is not monotonic for example $N(5)=5, N(6)=4$.
However since $N(f)$ behaves differently for even and odd $f$, it makes sense to investigate whether the two sub sequences for even and odd $f$ are monotonic. Indeed they must be eventually monotonic by Theorem \ref{NS with f}. And we prove in Section \ref{monotone section} that
\begin{theorem}\label{N(f) is monotonic}
For every positive integer $f$, $N(f)<N(f+2)$.
\end{theorem}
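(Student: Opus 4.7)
The plan is to construct an explicit injection $\phi \colon \{S : f(S)=f\} \hookrightarrow \{T : f(T)=f+2\}$ whose image omits at least one semigroup. Given $S$ with $f(S) = f$, I would start with the shift map
\[
\phi(S) \;=\; \{0\} \cup \bigl(S \cap [1, \lfloor f/2\rfloor]\bigr) \cup \{s+2 : s \in S,\ s > \lfloor f/2\rfloor\},
\]
which, in the binary encoding $b_i = \mathbf{1}[i \in S]$, inserts two zero bits at positions $\lfloor f/2\rfloor+1$ and $\lfloor f/2\rfloor+2$. A direct check shows that $\phi(S)$ always contains $[f+3,\infty)$, excludes $f+2$, and inherits the required symmetry $b'_i + b'_{f+2-i} \leq 1$ from the corresponding symmetry $b_i+b_{f-i}\leq 1$ of $S$. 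The only potential failure of closure under addition comes from sums $a+b$ with $a, b \in S \cap [1, \lfloor f/2\rfloor]$: the problematic cases are $a+b \in \{\lfloor f/2\rfloor+1, \lfloor f/2\rfloor+2\}$, or $a+b \geq \lfloor f/2\rfloor+3$ with $a+b-2 \notin S$.

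When $m(S) > \lfloor f/2\rfloor$ the set $S \cap [1, \lfloor f/2\rfloor]$ is empty, so $\phi$ is automatically well-defined and injective on this regime. Moreover, in this ``large multiplicity'' case the counts $N(f,m) = 2^{f-1-m}$ (closure is automatic when $2m>f$) satisfy $N(f+2,m) = 4\,N(f,m)$ for $m > (f+2)/2$, so the target side has abundant room to absorb the image. For the remaining $S$ with $m(S) \leq \lfloor f/2\rfloor$ I would partition by the equivalence class $\mathcal{N}_f(X)$ indexed by $X = S \cap [1, f/2)$; since $X \subseteq [1, (f+2)/2)$, the class $\mathcal{N}_{f+2}(X)$ is also defined, and the task reduces to producing an injection $\mathcal{N}_f(X) \hookrightarrow \mathcal{N}_{f+2}(X)$ for each such $X$. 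Within an obstructed class, one modifies $\phi$ locally---adjusting the shifted threshold, or correcting the target by adding or removing a bounded number of elements in the ``large part'' $S \cap [\lceil f/2\rceil, f]$---using the paper's detailed description of $\mathcal N_f(X)$ (especially the analysis for $f < 3m$, which covers almost all semigroups) to guarantee both a valid semigroup structure and global injectivity.

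For strict inequality, consider
\[
T^* \;=\; \{0\} \cup [\lfloor f/2\rfloor+2,\ f+1] \cup [f+3,\infty).
\]
Any two nonzero elements of $T^*$ are at least $\lfloor f/2\rfloor+2$, so their sum is at least $2\lfloor f/2\rfloor+4 \geq f+3 \in T^*$; together with $f+2 \notin T^*$ this shows $T^*$ is a numerical semigroup with Frobenius $f+2$. Since $\phi$ always places a gap at $\lfloor f/2\rfloor+2$ by construction, we have $T^* \notin \phi(N(f))$, which combined with the injection yields $N(f) < N(f+2)$. The main obstacle is the class-by-class step for obstructed $X$: the presence and shape of the closure obstruction depend on $X$, and constructing a local variant of $\phi$ that produces a valid Frobenius-$(f+2)$ semigroup while remaining injective across the combined cases is where the real technical work lies; this is exactly where the paper's earlier structural results on $\mathcal N_f(X)$ (and on semigroups with $f<3m$) become essential.
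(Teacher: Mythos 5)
Your proposal is a program rather than a proof, and the step you defer --- producing an injection $\mathcal{N}_f(X)\hookrightarrow\mathcal{N}_{f+2}(X)$ for each obstructed class $X$ --- is precisely the hard part. The naive shift $\phi$ genuinely fails on a positive proportion of semigroups: whenever $S$ contains $a,b\le\lfloor f/2\rfloor$ with $a+b=\lfloor f/2\rfloor+1$ or $\lfloor f/2\rfloor+2$, the sum lands on one of the two inserted forced gaps, so $\phi(S)$ is not closed under addition; and when $a+b\ge\lfloor f/2\rfloor+3$ the shifted set needs $a+b\in\phi(S)$, i.e.\ $a+b-2\in S$, which need not hold. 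You name these failure modes but never exhibit the "local modification," nor an argument that the modified maps are injective across classes. Without that, nothing is proved.

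There is also a structural reason to doubt that a clean class-by-class injection is the right tool everywhere: the paper's own proof only compares classes for $m>f/3$ (depth $\le 3$), where Theorem \ref{depth 3 count} gives the exact counts $N(Y,Z,f)=2^{m-1-\alpha}$ and $N(Y,Z,f+2)=2^{m-\alpha}$, so each class for $f$ is strictly smaller than the corresponding class for $f+2$ (this cardinality comparison is what replaces your explicit injection, and it already yields a surplus of more than $2.06\cdot 2^{\overline{f}}$ coming from $Y=\emptyset$ and the classes with $\operatorname{Max}(Y)\le 5$). For the deep semigroups with $m<f/3$ the paper makes no attempt to match them class by class; it simply bounds their total number by $1.9\cdot 2^{\overline{f}}$ using Backelin's estimate and absorbs them into the surplus, finishing with a finite check for $f\le 37$. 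Your plan says the $f<3m$ analysis "covers almost all semigroups," but an injection must account for \emph{all} of them, and you give no mechanism for the deep ones. So the gap is twofold: the local repair of $\phi$ on obstructed shallow classes is unspecified, and the deep-multiplicity regime is not addressed at all. (Your $T^*$ argument for strictness is fine but moot until an injection exists; in the paper strictness comes for free from the numerical surplus.)
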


It is well known that each numerical semigroup has a unique minimal set of generators, the number of generators is called the embedding dimension and is denoted by $e(S)$.
It is known that $e(S)\leq m(S)$, if equality holds for a numerical semigroup $S$ then it is said to be of max embedding dimension. Let $MED(f)$ be the number of max embedding dimension numerical semigroups with Frobenius number $f$.
In Theorem \ref{bounds for MED by Frob} we prove that
\begin{theorem}
There are positive constants $c$, $c'$ such that for any $f$
$$c2^{\frac{1}{3} f}<MED(f)<c'2^{0.41385f}.$$
\end{theorem}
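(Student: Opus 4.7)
The plan rests on two ingredients. First, there is a bijection between MED numerical semigroups of multiplicity $m$ and Frobenius $f$ and numerical semigroups $T$ of Frobenius $f-m$ containing $m$: if $S$ is MED with multiplicity $m$, then $T=\{s-m:s\in S,\,s\ge m\}$ is a numerical semigroup (because the MED identity $a+b-m\in S$ for $a,b\in S\setminus\{0\}$ is exactly what closure of $T$ requires), and $m\in T$ since $2m\in S$; conversely, given $T\ni m$ with $f(T)=f-m$, the set $S=\{0\}\cup(T+m)$ is MED with multiplicity $m$ and Frobenius $f$. Hence
\[
MED(f)=\sum_{m=1}^{f+1}\#\{T:f(T)=f-m,\ m\in T\}.
\]
Second, the Apéry set $\{0,w_1,\dots,w_{m-1}\}$ with $w_i\equiv i\pmod m$ minimal in $S$ is written as $w_i=i+k_im$ with $k_i\ge 1$, and the MED condition becomes the combinatorial inequalities
\[
k_{i+j}\le k_i+k_j-1\quad(i+j\le m-1),\qquad k_{i+j-m}\le k_i+k_j\quad(m<i+j\le 2m-2),
\]
with Frobenius $\max_i\bigl(i+(k_i-1)m\bigr)$.

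\textbf{Lower bound.} I would exhibit a family of size at least $c\,2^{f/3}$ using the Apéry parametrization. Fixing $k_i\in\{1,2\}$, the MED inequalities collapse to the single requirement that $\{i:k_i=1\}$ be closed under addition within $[1,m-1]$, and the Frobenius condition forces the largest index $a$ with $k_a=2$ to equal $f-m$. Counting such configurations becomes equivalent to counting numerical-semigroup-like sub-structures of $[1,a-1]$ avoiding $a$ as a two-fold sum, which by Theorem \ref{NS with f} contributes roughly $2^{(f-m)/2}$ per admissible $m$. Allowing $k_i$ to take slightly larger values (for instance $k_i\le 3$ once $k_1\ge 2$) enlarges the family, and optimizing $m$ to balance the per-coordinate freedom against the Frobenius constraint yields the exponent $1/3$.

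\textbf{Upper bound.} Iterating the MED inequality with $j=1$ gives $k_i\le i(k_1-1)+1$, so once $k_1$ is fixed the tuple $(k_2,\dots,k_{m-1})$ is confined to an explicit polytope. For each $m$ and $k_1$, the number of valid tuples with Frobenius exactly $f$ is bounded by the number of lattice points in this polytope meeting the Frobenius constraint $\max_i(i+(k_i-1)m)=f$. Summing over $k_1$ and over $m$, the resulting exponent takes the form $H(m/f)\cdot f$ for an explicit piecewise-smooth function $H$; the numerical value $0.41385$ is the maximum of $H$ over the ratio $m/f\in(0,1)$.

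\textbf{Main obstacle.} The crux in both directions is the global coupling of the MED inequalities across all residues: a naive per-coordinate count gives only the Backelin exponent $1/2$ (as $\sum_m N(f-m)\sim 2^{f/2}$). For the lower bound, the challenge is selecting a combinatorially transparent sub-family in which every coupled inequality is automatic. For the upper bound, one must bound the polytope volume sharply enough to push the exponent strictly below $1/2$; the specific constant $0.41385$ then emerges from a one-parameter numerical optimization at the end.
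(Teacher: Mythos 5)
Your first ingredient---the bijection $S\mapsto (S\setminus\{0\})-m$ identifying $MED(m,f)$ with the number of numerical semigroups of Frobenius number $f-m$ that contain $m$---is exactly the paper's starting point (Corollary \ref{f-m}). After that, both halves of your argument have genuine gaps. For the lower bound, the family you actually construct ($k_i\in\{1,2\}$) forces every Ap\'ery element below $3m$, hence depth $2$ and $m>f/2$; the resulting count is governed by $2^{(f-m)/2}<2^{f/4}$, which falls short of $2^{f/3}$. The step that would close this gap (``allowing $k_i$ to take slightly larger values \dots\ optimizing $m$ yields the exponent $1/3$'') is precisely the part you do not carry out. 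The paper avoids the Ap\'ery bookkeeping entirely: set $m=\lceil (f+1)/3\rceil$, so that $f_1=f-m<2m$; every depth-$2$ numerical semigroup with Frobenius number $f_1$ whose element set includes $m$ (possible because $m>f_1/2$) pulls back to an MED semigroup with Frobenius number $f$, and there are $2^{\overline{f_1}-1}\geq 2^{-8/3}2^{f/3}$ of them.

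For the upper bound the situation is worse: the constraint $k_i\le i(k_1-1)+1$ alone is far too weak (the resulting ``polytope'' contains far more than $2^{f/2}$ lattice points), you never define the function $H$ or perform the claimed optimization, and your closing paragraph concedes that pushing the exponent strictly below $1/2$ is the unresolved crux. The constant $0.41385$ is not something a volume computation would produce; in the paper it is $(1-u)/2$ with $u=0.1723$, arising as the crossover between two imported bounds of Backelin: for $m<uf$ one uses $MED(m,f)\le N_{mul}(m,f)<0.071\cdot 2^{f/2}\left(\tfrac{13}{16}\right)^{f/8}2^{-0.628(f/4-m)}$, and for $m\ge uf$ one uses $MED(m,f)\le N(f-m)<4\cdot 2^{(f-m)/2}$; summing over $m$ and checking numerically that the first regime also stays below $2^{0.41385 f}$ finishes the proof. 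Without a quantitative input of this kind (Backelin's estimate for $N_{mul}(m,f)$ when $m$ is small, or an equivalent), your outline does not yield any exponent strictly less than $1/2$.
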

We also make a conjecture about the growth of $MED(f)$.
\begin{conjecture}
The following limit exists
$$\lim_{f\to\infty}\frac{\text{log}_{2}(MED(f))}{f}.$$
\end{conjecture}

We can also look at the set of numerical semigroups of a fixed genus $g$. 
The following is proved in \cite{Zhai}, here $\phi$ is the golden ratio.
\begin{theorem}\cite[Theorem 1]{Zhai}
The following limit exists 
$$\lim_{g\to\infty}\frac{n_g}{\phi^{g}}.$$
\end{theorem}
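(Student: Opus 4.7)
The plan is to analyze the canonical rooted tree $\mathcal{T}$ on all numerical semigroups, in which $\mathbb{N}$ is the root and every other semigroup $S$ has as parent the semigroup $S\cup\{f(S)\}$ obtained by filling in the Frobenius number. Because passing to the parent always decreases the genus by one, the vertices at depth $g$ are exactly the numerical semigroups of genus $g$, so $n_g$ counts them. The children of $S$ in $\mathcal{T}$ are the semigroups $S\setminus\{x\}$, where $x$ runs over the minimal generators of $S$ satisfying $x>f(S)$. Writing $c(S)$ for the number of such "effective" generators, I obtain the master recursion
$$n_{g+1}=\sum_{S:\,g(S)=g}c(S),$$
so the task is to show that this weighted branching averages to $\phi$ per step, asymptotically.

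First I would encode the local combinatorial structure of $S$ near its Frobenius number by a finite "type": for a fixed window size $k$, record which of the integers $f(S)-k,\ldots,f(S)+k$ belong to $S$. A semigroup is declared \emph{tame} if its type, and the types of all of its descendants in $\mathcal{T}$, are determined by the type of $S$ via a fixed transition rule. On the tame sub-tree, both $c(S)$ and the children's types depend only on the type of $S$, so counting reduces to iterating a nonnegative transfer matrix $M$ on a finite state space. The spectral radius of $M$ should equal $\phi$; the heuristic is that a typical semigroup has two minimal generators just above $f(S)$, and removing either one constrains the pattern of generators of the new semigroup so that only one additional removal at the next level is typically available, producing a Fibonacci-like recurrence $a_{g+1}=a_g+a_{g-1}$ with growth rate $\phi$. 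Perron--Frobenius applied to $M$, once irreducibility and aperiodicity are verified, then yields $n_g^{\mathrm{tame}}/\phi^g\to C$ for a constant $C>0$.

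The remaining and main obstacle is to bound the contribution of non-tame semigroups: a priori an atypical configuration of gaps near the Frobenius number can spawn many children, so one must show the fraction of non-tame vertices at depth $g$ decays fast enough. The plan is to stratify semigroups by the smallest window $k(S)$ that renders $S$ tame, and to prove an exponential tail bound: the number of depth-$g$ semigroups with $k(S)\geq k$ is at most $C'\lambda^g \mu^{-k}$ for some $\lambda<\phi$ and $\mu>1$. Summing over $k$ then absorbs the non-tame contribution into $o(\phi^g)$. Establishing this tail bound is the hard part, because it requires quantitative combinatorial lemmas showing that the "defect" in the gap pattern regularizes along generic branches of $\mathcal{T}$ --- essentially a large-deviation estimate for a non-uniform branching process, where one must control correlations between the number of children and the persistence of atypicality.
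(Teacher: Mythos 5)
This statement is not proved in the paper at all: it is quoted verbatim from Zhai \cite{Zhai}, so the only ``proof'' here is the citation. Your proposal is therefore being measured against Zhai's actual argument, and as written it is a research programme rather than a proof: both of its load-bearing steps are left unestablished. First, the transfer-matrix core is only heuristic (``the spectral radius of $M$ \emph{should} equal $\phi$''), and it rests on a premise that is false as stated: the number of children of $S$ in the semigroup tree --- the number of minimal generators of $S$ exceeding $f(S)$ --- is not determined by a bounded window around $f(S)$. Whether $f(S)+j$ is a minimal generator depends on whether $f(S)+j-s\in S$ for every nonzero $s\in S$, which involves elements of $S$ arbitrarily far below the window; so the finite ``type'' you define does not determine $c(S)$, and the claimed reduction to a finite-state Markov chain does not go through without substantial additional work. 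Second, you explicitly defer the exponential tail bound for non-tame vertices, and that bound \emph{is} the hard content of Zhai's theorem (in his formulation, showing that semigroups with $F(S)>3m(S)$ contribute $o(\phi^g)$, which is Theorem \ref{d 2,3} of the present paper); acknowledging it as ``the hard part'' does not discharge it.

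For comparison, Zhai's route (building on Zhao \cite{Zhao}) avoids the tree entirely for the main term. One stratifies by depth: depth-$2$ semigroups of genus $g$ number exactly $F_{g+1}-1$ by a direct count of gap patterns in $(m,2m)$ (the Fibonacci number enters through the identity $F_{n+1}=\sum_k\binom{n-k}{k}$, exactly as in Theorem \ref{2m-F=k} above); depth-$3$ semigroups are classified by the finite type $(k;A)$ with $k=F(S)-2m(S)$ and $A=(S\cap[m,m+k])-m$, each type contributing asymptotically a constant times $\phi^g$ with the constants summable over all types (Theorems \ref{F-2m=k} and \ref{sum converges}); and the depth-$\geq 4$ semigroups are shown to be $o(\phi^g)$ by a separate, genuinely difficult argument. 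If you want to salvage your approach, you would need to replace your window-based ``type'' by the Zhao type $(k;A)$, which does control the relevant combinatorics, and you would still need to import or reprove the $F>3m$ bound.
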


Nathan and Ye in \cite{Nathan} studied the set of numerical semigroups with a fixed genus. They proved that almost all of them have Frobenius number close to twice the multiplicity.
\begin{theorem}\cite{Nathan}
Let $\epsilon>0$ then
$$\lim_{g\to\infty}\frac{\#\{S|g(S)=g, |F(S)-2m(S)|>\epsilon g\}} {n_g}=0.$$
\end{theorem}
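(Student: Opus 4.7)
I would parameterise numerical semigroups of genus $g$ by their Apéry sets (Kunz coordinates), show that the generic one has ``depth at most $2$'', and deduce that $F(S)$ is within $o(g)$ of $2m(S)$. For $S$ with multiplicity $m$, write the Apéry set $\mathrm{Ap}(S,m)=\{0,w_1,\dots,w_{m-1}\}$ with $w_r\equiv r\pmod m$ the smallest element of $S$ in that residue class, and set $k_r=(w_r-r)/m\ge 1$. Then
\begin{equation*}
g(S)=\sum_{r=1}^{m-1}k_r,\qquad F(S)=\max_{1\le r\le m-1}\bigl(r+(k_r-1)m\bigr).
\end{equation*}
Call $S$ \emph{depth $2$} if $k_r\le 2$ for every $r$; equivalently $F(S)<2m(S)$. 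In this case, writing $r^*=\max\{r:k_r=2\}$, one has $F(S)=r^*+m$, so $|F(S)-2m(S)|=m-r^*$.

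Step $1$: show that depth $2$ dominates,
\begin{equation*}
\#\{S:g(S)=g,\ \max_r k_r\ge 3\}=o(n_g).
\end{equation*}
The natural route is a surgery argument on the Kunz tuple: starting from a depth-$\ge 3$ semigroup with some $k_{r_0}\ge 3$, decrement $k_{r_0}$ by one and adjust a bounded number of other coordinates to preserve the Kunz relations. Since each resulting depth-$2$ tuple is hit only boundedly many times, this yields an exponential gap against $n_g\sim c\phi^g$ (Zhai).

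Step $2$: within depth $2$, the data amounts to a subset $T=\{r:k_r=2\}\subseteq\{1,\dots,m-1\}$ of size $s=g-m+1$ obeying Kunz's admissibility relations. The bad event $|F-2m|>\epsilon g$ becomes $T\subseteq\{1,\dots,m-\epsilon g-1\}$, and the tail bound
\begin{equation*}
\frac{\binom{m-\epsilon g-1}{s}}{\binom{m-1}{s}}\le\left(\frac{2m-g-2}{m-1}\right)^{\epsilon g}
\end{equation*}
is exponentially small once $m\le(1-\delta) g$. Since the typical multiplicity of a semigroup of genus $g$ is concentrated well below $g$ (semigroups with $m\ge(1-\delta)g$ are also a vanishing fraction, by Zhai), this finishes Step $2$.

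The main obstacle is Step $1$: verifying that a direct Kunz-coordinate surgery produces a valid Apéry tuple requires a careful case analysis, because carelessly decrementing a $k_{r_0}$ can violate the semigroup condition $w_a+w_b\in w_{a+b}+m\mathbb{Z}_{\ge 0}$. A cleaner alternative is to invoke the refined generating-function machinery in Zhai's proof of $n_g\sim c\phi^g$, which already identifies depth $2$ as the dominant stratum; Step $2$ then reduces to the essentially routine $\binom{n-k}{s}/\binom{n}{s}$ tail estimate above once the typical multiplicity is pinned down.
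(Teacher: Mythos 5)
Your Step 1 is false, and it is the load-bearing step. Semigroups of genus $g$ with $\max_r k_r\ge 3$, i.e.\ with $F(S)>2m(S)$, are not a vanishing fraction of $n_g$ --- they are in fact the majority. The number of depth-$2$ semigroups of genus $g$ is exactly $F_{g+1}-1\sim \phi^{g+1}/\sqrt{5}\approx 0.72\,\phi^{g}$ (the corollary following Theorem \ref{2m-F=k}), whereas $n_g\sim S\phi^g$ with $S$ several times larger (numerically $S\approx 3.8$), so depth $2$ accounts for only roughly a fifth of all semigroups of genus $g$. No surgery on Kunz tuples with boundedly-many-to-one fibers can inject the depth-$\ge 3$ stratum into the depth-$2$ stratum, because the former is strictly larger; the ``exponential gap'' you hope for does not exist. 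What is actually true (Zhai, Theorem \ref{d 2,3}) is that $F(S)>3m(S)$ is negligible: the depth-$3$ stratum $2m(S)<F(S)<3m(S)$ carries positive density and must be analysed, not discarded. Since your Step 2 only treats the event $2m(S)-F(S)>\epsilon g$ inside depth $2$, the entire half of the statement concerning $F(S)-2m(S)>\epsilon g$ is left without proof.

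For comparison, the argument in Section \ref{genus} (which proves the stronger Theorem \ref{F 2m genus result}, with $\epsilon g$ replaced by a constant $N$) runs as follows: (i) discard $F>3m$ by Zhai's theorem; (ii) for the negative side, count semigroups with $2m(S)-F(S)=k$ exactly --- there are $F_{g-k}$ of them --- and sum the Fibonacci tail over $k>N$; (iii) for the positive side, stratify depth-$3$ semigroups by Zhao's type $(k;A)$ with $k=F(S)-2m(S)$ and $A=(S\cap[m,m+k])-m$, bound the number of genus-$g$ semigroups of type $(k;A)$ by $F_{g-|(A+A)\cap [0,k]|+|A|-k-1}$ (Theorem \ref{F-2m=k}), and use the convergence of $\sum_{k}\sum_{A\in A_k}\phi^{-|(A+A)\cap [0,k]|+|A|-k-1}$ (Theorem \ref{sum converges}) to make the tail $k>N$ small. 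Your Step 2 is essentially a weaker version of (ii) --- and note that within depth $2$ the Kunz inequalities are vacuous, since $k_a+k_b\ge 2\ge k_{a+b}$ always holds, so $T$ is genuinely an arbitrary subset and the count is a plain binomial coefficient --- but you have no substitute for (iii), which is where the real content of the theorem lies.
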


We make this result stronger and prove in Section \ref{genus} that
\begin{theorem}\label{F 2m genus result}
For any $\epsilon>0$ there is an $N$ such that for sufficiently large $g$
$$\frac{\#\{S|g(S)=g, |F(S)-2m(S)|>N\}}{n_g}<\epsilon.$$
\end{theorem}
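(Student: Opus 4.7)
The plan is to obtain a geometric tail estimate on $F(S)-2m(S)$ that is uniform in the Frobenius number, and then transfer this to the fixed-genus setting using Theorem~\ref{Main Distribution} and Zhai's asymptotic $n_g\sim C\phi^{g}$. Fix $\epsilon>0$; it suffices to bound each of the tails $F(S)-2m(S)>N$ and $2m(S)-F(S)>N$ by $(\epsilon/2)n_g$.

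The tail $2m(S)-F(S)=k$ (for $k>0$) is direct. If $F(S)=f$ and $m(S)=(f+k)/2$, then $X(S):=S\cap[1,f/2)=\emptyset$, and every element of $S$ in $[1,f]$ lies in the interval $[m(S),f-1]$ of $(f-k)/2$ integers. Any two such elements sum to more than $f$, so no closure constraint arises and the count is at most $2^{(f-k)/2}$. Summing over $k>N$ gives
$$\#\{S : F(S)=f,\ 2m(S)-F(S)>N\}\le C\cdot 2^{-N/2}\cdot N(f),$$
a geometrically decaying fraction of $N(f)$.

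The tail $F(S)-2m(S)=k>0$ is more delicate. Here $X(S)$ is non-empty and forced into the interval $[m(S),\lfloor f/2\rfloor)$ of at most $\lceil k/2\rceil$ integers. Since $k<m(S)$ in this range, the paper's analysis of the $f<3m$ regime applies: the equivalence classes $\{S : F(S)=f,\,X(S)=X\}$ admit an explicit combinatorial parametrization. A count over admissible $X$ together with control on each equivalence class should yield
$$\#\{S : F(S)=f,\ F(S)-2m(S)=k\}\le C\rho^{k}N(f)$$
for absolute constants $C>0$ and $\rho\in(0,1)$.

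To transfer to fixed genus, I would use Theorem~\ref{Main Distribution}: its Gaussian-times-power-series form implies that the conditional distribution of $F(S)-2m(S)$ across $\{S : F(S)=f\}$ is essentially independent of $g(S)$, so
$$\#\{S : F(S)=f,\ g(S)=g,\ |F-2m|>N\}\le C'\rho^{N}\,\#\{S : F(S)=f,\ g(S)=g\}.$$
Summing over $f$ yields $\le C'\rho^{N}n_g<\epsilon n_g$ for $N$ large enough. The main obstacle is establishing the geometric bound in the $F(S)-2m(S)=k>0$ tail, which requires showing that each equivalence class $\{S : X(S)=X\}$ contracts by a factor $\rho<1$ for each unit reduction in the length of the interval containing $X$, uniformly in $f$. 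The independence assertion used in the transfer is a secondary but still delicate point; it should follow from the explicit Gaussian-times-power-series form in Theorem~\ref{Main Distribution} but may deserve a separate lemma.
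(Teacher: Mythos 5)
There is a genuine gap, and it is concentrated in your transfer step. Theorem~\ref{Main Distribution} controls $\frac{N(f,n)}{N(f)}$ only up to an \emph{additive} error $\epsilon$, uniform in $n$; since the actual values of $\frac{N(f,n)}{N(f)}$ are of order $f^{-1/2}$ near the peak of the Gaussian, a fixed additive error $\epsilon$ is larger than the quantity being approximated once $f$ is large. You therefore cannot extract any multiplicative control on $N(f,n)$ from that theorem, let alone the much stronger assertion that the conditional distribution of $F(S)-2m(S)$ given $F(S)=f$ is ``essentially independent of $g(S)$'' --- Theorem~\ref{Main Distribution} says nothing about the joint distribution of $(g(S), m(S))$ at fixed $f$, only about the marginal of $g(S)$. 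Without that joint control the inequality $\#\{F=f,\ g=g,\ |F-2m|>N\}\le C'\rho^N\#\{F=f,\ g=g\}$ is unsupported, and summing it over $f$ proves nothing. Two further gaps: your bound for the tail $F(S)-2m(S)=k>0$ is only asserted (``should yield''), and you never treat the depth $\ge 4$ semigroups, for which $F(S)-2m(S)\ge m(S)$ and your parametrization of $X(S)$ into an interval of length $\lceil k/2\rceil$ breaks down; these exist at every genus (e.g.\ $\langle 2,2g+1\rangle$) and must be bounded separately.

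The paper avoids all of this by working directly at fixed genus rather than transferring from fixed Frobenius number. For $2m(S)-F(S)=k>0$ it counts exactly: there are $F_{g-k}$ such semigroups of genus $g$ (Theorem~\ref{2m-F=k}), and the Fibonacci tail $\sum_{k>N}F_{g-k}$ is $O(\phi^{g-N})$. For $F(S)-2m(S)=k$ with $0<k<m(S)$ it uses Zhao's type decomposition $(k;A)$ and the bound $F_{g-|(A+A)\cap[0,k]|+|A|-k-1}$ per type (Theorem~\ref{F-2m=k}), together with convergence of $\sum_k\sum_{A\in A_k}\phi^{-|(A+A)\cap[0,k]|+|A|-k-1}$ (Theorem~\ref{sum converges}) to make the tail over $k>N$ small. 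The depth $\ge 4$ case is disposed of by Zhai's result that such semigroups are $o(\phi^g)$ (Theorem~\ref{d 2,3}), and everything is normalized by $n_g\sim C\phi^g$. If you want to salvage your outline, you should abandon the transfer and redo both tail estimates intrinsically at fixed genus along these lines.
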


\section{Depth of a numerical semigroup}
Notation: Throughout this paper, $\overline{f}= \left\lfloor \frac{f-1}{2}\right\rfloor$.
Also we will use intervals $[a,b]=\{n\in\mathbb{Z}\mid a\leq n\leq b\}$,
$(a,b)=\{n\in\mathbb{Z}\mid a< n< b\}$.

The depth of a numerical semigroup $S$ is $q(S)=\left\lceil \frac{f(S)+1}{m(S)}\right\rceil$.
In particular, $q=1$ only for the numerical semigroup $\{0,f+1\rightarrow\}$. Here, the arrow indicates that all natural numbers after $f+1$ are in the semigroup.
Moreover, $q=2$ when $\frac{f}{2}<m<f$; $q=3$ when $\frac{f}{3}<m\leq \overline{f}$ and $q\geq 4$ when $m<\frac{f}{3}$. Most numerical semigroups have depth $2$ or $3$ (Corollary \ref{depth >3}), we will primarily be interested in these two families.

\begin{theorem}\label{m near f/2}
For any $\epsilon>0$, there is a $M$ such that for all $f$
$$\frac{\#\{S\mid f(S)=f,|m(S)-\frac{f}{2}|>M\}} {2^{\overline{f}}} <\epsilon.$$
\end{theorem}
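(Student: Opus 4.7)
The plan is to split the set $\{S : f(S)=f,\ |m(S)-f/2| > M\}$ into two tails according to whether $m(S) > f/2 + M$ or $m(S) < f/2 - M$, and bound each by a decaying multiple of $2^{\overline{f}}$.

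For the upper tail, when $m > f/2$ any two elements of $S \cap [m, f-1]$ sum to more than $f$ and hence lie automatically in $S$, so closure imposes no constraint in this range. Thus a numerical semigroup with multiplicity $m > f/2$ and Frobenius number $f$ is determined by an arbitrary subset of $[m+1, f-1]$ (provided $m \nmid f$), giving exactly $2^{f-1-m}$ such semigroups. Summing the geometric series,
\[
\sum_{m > f/2 + M} 2^{f-1-m} \leq 2^{\lfloor f/2\rfloor - M},
\]
so the upper tail is at most an $O(2^{-M})$ fraction of $2^{\overline{f}}$, uniformly in $f$.

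The lower tail $m(S) < f/2 - M$ is more delicate because the closure constraints are active. My plan is to analyze the Apéry set $\mathrm{Ap}(S,m) = \{0, w_1, \ldots, w_{m-1}\}$, where $w_r$ is the smallest element of $S$ in residue $r \pmod m$. Writing $w_r = r + a_r m$, the Frobenius condition forces $a_{r^*} = \lfloor f/m\rfloor + 1$ for $r^* = f \bmod m$, while closure becomes subadditivity $a_{i+j} \leq a_i + a_j$ whenever $i+j < m$. For depth-$3$ semigroups each $a_r \in \{1,2,3\}$ with $a_r \leq 2$ whenever $r > f-2m$; since $m$ close to $f/2$ leaves only $f-2m = O(M)$ residues where $a_r = 3$ is even allowed, a careful enumeration using subadditivity and the prescribed $a_{r^*} = 3$ should yield a bound $N(f,m) \leq C \cdot 2^{m}$ up to polynomial factors, which sums over $m < f/2 - M$ to an $O(2^{\overline{f} - M})$ contribution. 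Depth-$\geq 4$ semigroups (where $m < f/3$) contribute only $o(2^{\overline{f}})$ by Corollary~\ref{depth >3}, and for small $f$ the tail $|m(S)-f/2| > M$ is trivially empty once $M > f/2 + 1$.

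The main obstacle is quantifying the lower-tail Apéry count: the naive product bound $3^{f-2m} \cdot 2^{3m-f-1}$ grows like $(9/8)^M \cdot 2^{\overline{f}}$ and so is useless. The subadditivity constraint combined with the fixed value $a_{r^*} = 3$ must be leveraged to show that valid $\{1,2,3\}$-valued sequences have far fewer ``$3$''s than unrestricted ones; making this quantitative enough to extract exponential decay in $M$ is the technical crux.
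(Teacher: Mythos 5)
Your proposal has a genuine gap: the lower tail $m(S)<\frac{f}{2}-M$, which is where all the content of the theorem lives, is left unproved, and you say so yourself (``making this quantitative enough to extract exponential decay in $M$ is the technical crux''). The upper-tail computation is correct and matches the paper's Theorem~\ref{depth 2 count} (for $m>\frac{f}{2}$ the semigroups with multiplicity $m$ number exactly $2^{f-1-m}$, and the geometric sum gives an $O(2^{-M})$ fraction), but that part is easy. What you need for the lower tail is precisely a bound of the shape $N_{mul}(m,f)\leq C\,2^{\overline{f}}c^{\overline{f}-m}$ for some $c<1$ uniformly in $f$ --- the paper itself quotes such a bound later (with $c=\frac{11}{12}$) as a lemma from Backelin, and the theorem you are proving is simply cited to Proposition~2 of \cite{Backelin}. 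Your Ap\'ery-set framework is a reasonable starting point, but the step from ``subadditivity plus the forced value $a_{r^*}$ restricts the $3$'s'' to an exponential saving over the trivial $3^{f-2m}2^{3m-f-1}$ count is exactly the nontrivial estimate, and no mechanism for it is given. Note also that your proposed target $N(f,m)\leq C\cdot 2^{m}$ would not even suffice as stated: for $m$ just below $\frac{f}{2}-M$ this is about $2^{\overline{f}-M}$, but summing it over the roughly $\frac{f}{6}$ values of $m$ in $(\frac{f}{3},\frac{f}{2}-M)$ costs a factor growing with $f$ unless the bound decays geometrically in $\overline{f}-m$, so the uniformity in $f$ claimed by the theorem would be lost. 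As it stands the proposal is a correct reduction plus an honest description of the hard part, not a proof.
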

\begin{proof}
See Proposition 2 of \cite{Backelin}.
\end{proof}
\begin{corollary}\label{depth >3}
$$\lim_{f\to\infty} \frac{\#\{S\mid f(S)=f, q(S)>3\}} {2^{\overline{f}}} =0.$$
\end{corollary}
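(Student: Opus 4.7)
The plan is to deduce the corollary directly from Theorem \ref{m near f/2}, exploiting the fact that $q(S)>3$ forces $m(S)$ to be significantly smaller than $f/2$.

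First I would recall the characterization given just before the statement: $q(S)\geq 4$ is equivalent to $m(S)<f/3$. So the numerator counts exactly those numerical semigroups with $f(S)=f$ and $m(S)<f/3$. The key observation is that any such $S$ satisfies
\[
\Bigl|m(S)-\tfrac{f}{2}\Bigr| \;=\; \tfrac{f}{2}-m(S) \;>\; \tfrac{f}{2}-\tfrac{f}{3} \;=\; \tfrac{f}{6}.
\]
In particular, once $f$ is large enough that $f/6$ exceeds any given threshold $M$, the set in the numerator is contained in the larger set appearing in Theorem \ref{m near f/2}.

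The execution is then a standard $\epsilon$-chase. Fix $\epsilon>0$. By Theorem \ref{m near f/2}, there exists $M$ (depending only on $\epsilon$) such that for all $f$,
\[
\#\{S\mid f(S)=f,\ |m(S)-\tfrac{f}{2}|>M\} \;<\; \epsilon \cdot 2^{\overline{f}}.
\]
For $f>6M$, the inclusion above gives $\#\{S\mid f(S)=f,\ q(S)>3\}<\epsilon\cdot 2^{\overline{f}}$, and therefore the lim sup of the ratio is at most $\epsilon$. Since $\epsilon$ was arbitrary, the limit is $0$.

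There is essentially no obstacle here: the whole argument is a one-line comparison of sets followed by quoting Theorem \ref{m near f/2}. The only thing to be slightly careful about is the correct translation between the inequality $m<f/3$ (which characterizes $q\geq 4$) and a lower bound on $|m-f/2|$, but the gap $f/6$ grows with $f$, so any fixed $M$ is eventually swamped, which is exactly what the argument needs.
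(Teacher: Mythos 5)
Your proposal is correct and is essentially the paper's own argument: the paper's proof is the single observation that $q(S)\geq 4$ implies $|m(S)-\frac{f(S)}{2}|>\frac{f(S)}{6}$, after which the conclusion follows from Theorem \ref{m near f/2} exactly as in your $\epsilon$-chase. You have simply written out the routine limit argument that the paper leaves implicit.
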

\begin{proof}
Note that $q(S)\geq 4$ implies $|m(S)-\frac{f(S)}{2}|>\frac{f(S)}{6}$.
\end{proof}

We denote by $n(S)$ the size of $S\cap[1,f(S)]$. Therefore, $n(S)+g(S)=f(S)$. Note that for any $x$ in $[1,f-1]$, at least one of $x,f-x$ must be a gap of $S$ and hence $g(S)\geq \left\lceil \frac{f+1}{2} \right\rceil$ i.e. $n(S)\leq \overline{f}$. Among numerical semigroups with Frobenius number $f$, studying the distribution of $g(S)$ is equivalent to studying the distribution of $n(S)$. We shall be using $n(S)$ as it makes the expressions simpler.
Denote by $N(f,n)$ the number of numerical semigroups with Frobenius number $f$ and $n(S)=n$.

We now enumerate the numerical semigroups of depth $2$.
\begin{theorem}\label{depth 2 count}
There are $2^{\overline{f}}-1$ numerical semigroups of depth $2$ and Frobenius number $f$.
Moreover, there are ${\overline{f}}\choose{n}$ numerical semigroups of depth $2$, Frobenius number $f$ and $n(S)=n$.
\end{theorem}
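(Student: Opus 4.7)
The plan is to show that depth 2 numerical semigroups with Frobenius number $f$ are in bijection with nonempty subsets of the integer interval $I = [\lfloor f/2\rfloor+1,\, f-1]$, and then observe that this interval has exactly $\overline{f}$ elements.

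First I would characterize the depth 2 case concretely. By the condition $f/2 < m < f$, the multiplicity satisfies $m > f/2$, so every nonzero element of $S\cap[1,f-1]$ lies in $I$. Since $f\notin S$, we have $S\cap[1,f] = S\cap[1,f-1]$, so this set is a subset $U\subseteq I$, and it is nonempty because it contains $m$.

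Next I would verify the reverse direction, which is the heart of the argument. Given any nonempty $U\subseteq I$, define
\[
S_U = \{0\}\cup U\cup [f+1,\infty).
\]
I claim $S_U$ is a numerical semigroup with Frobenius number $f$ and depth $2$. The crucial closure check is easy: if $u_1,u_2\in U$, then $u_1+u_2 > 2\cdot (f/2) = f$, so $u_1+u_2\in [f+1,\infty)\subseteq S_U$; sums involving $0$ or an element of $[f+1,\infty)$ are trivially in $S_U$. The complement of $S_U$ is finite (contained in $[1,f]$), $f\notin S_U$ because $U\subseteq I$ avoids $f$, and the multiplicity $\min U$ exceeds $f/2$, so depth is indeed $2$. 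Thus $U\mapsto S_U$ is a bijection from nonempty subsets of $I$ to depth 2 numerical semigroups of Frobenius number $f$, and this bijection preserves cardinality in the sense that $n(S_U) = |U|$.

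Finally I would compute $|I|$. Separating into parities, for odd $f$ we have $\lfloor f/2\rfloor+1 = (f+1)/2$, so $|I| = (f-1)/2 = \overline{f}$; for even $f$ we have $\lfloor f/2\rfloor+1 = f/2+1$, so $|I| = f/2-1 = \overline{f}$. Therefore the number of nonempty subsets of $I$ is $2^{\overline{f}}-1$, and the number of $n$-element subsets is $\binom{\overline{f}}{n}$, proving both statements. I do not anticipate a hard step here; the main thing to get right is the closure verification, which works precisely because $m>f/2$ forces every pairwise sum into the automatic part of the semigroup.
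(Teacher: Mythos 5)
Your proof is correct and follows essentially the same route as the paper: both identify depth-$2$ semigroups with Frobenius number $f$ with nonempty subsets of the length-$\overline{f}$ interval of integers strictly between $f/2$ and $f$, using the fact that any two such elements sum to more than $f$ so closure is automatic. Your write-up just makes the bijection and the parity check on $|I|$ more explicit.
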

\begin{proof}
The first part follows from the second as we add up ${\overline{f}}\choose{n}$ for $1\leq n\leq \overline{f}$. For the second part fix a $n$, pick a subset $T$ of $[f-\overline{f},f-1]$ of size $n$, this can be done in ${\overline{f}}\choose{n}$ ways. Once we have such a $T$, the sum of any two numbers in $T$ is larger than $f$. Therefore $\{0\}\cup T\cup\{f+1\rightarrow\}$ is a numerical semigroup. It is clear that all numerical semigroups of depth $2$ are achieved this way.
\end{proof}

\section{Partition of the set of numerical semigroups}\label{Partition}

In this section we will  describe a partition of the collection of all numerical semigroups. We will then study the equivalence classes of the partition in the next section.

\begin{definition}
Given a numerical semigroup $S$ with Frobenius number $f$, let
$Y(S)=\{t\mid \overline{f} -t\in S,\; 0\leq t<\overline{f}\}$.
\end{definition}
We define an equivalence relation on the set of all numerical semigroups.
Numerical semigroups $S$ and $S'$ are related if $Y(S)=Y(S')$.
If $S$ and $S'$ have the same Frobenius number then $Y(S)=Y(S')$ if and only if $X(S)=X(S')$.
Therefore, if we restrict ourselves to numerical semigroup of a fixed Frobenius number then we get the partition introduced in \cite{N.S. with Frob} and if we restrict to numerical semigroups with a fixed Frobenius number and multiplicity then we get the partition introduced in \cite{N.S. with Frob and Mult}.

\begin{definition}
Given a finite subset $Y\subseteq \mathbb{Z}_{+}$, denote by $N(Y,f)$ the number of numerical semigroups with Frobenius number $f$ and $Y(S)=Y$.
\end{definition}

It is clear that $Y(S)=\emptyset$ if and only if $S$ has depth $1$ or $2$.
Therefore $N(\emptyset,f)=1+2^{\overline{f}}-1=2^{\overline{f}}$.
Next, if $S$ has depth at least $3$ i.e. if $Y(S)\neq\emptyset$ then we will denote by $l(S)$ the largest number in $Y(S)$.
The multiplicity of $S$ is $\overline{f(S)}-l(S)$.
Therefore, $S$ has depth $3$ when $f(S)>6l(S)+6$.
Now, we restate theorem \ref{m near f/2} as follows.

\begin{theorem}\label{In terms of Y}
For every $\epsilon>0$ there exists a $L$ such that for every $f$
$$\frac{N(\emptyset,f)+\sum_{Y:Max(Y)\leq L} N(Y,f)}{N(f)}>1-\epsilon.$$
\end{theorem}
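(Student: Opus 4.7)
The plan is to transport Theorem \ref{m near f/2} across the dictionary between the invariants $\max Y(S)$ and $m(S)$, and then divide by $N(f)$ using Theorem \ref{NS with f}.

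First I would observe that the numerator in the statement counts the numerical semigroups with $f(S)=f$ that satisfy either $Y(S)=\emptyset$ (which by the remark preceding the theorem is exactly depth $1$ or $2$) or $Y(S)\neq\emptyset$ and $\max Y(S)\leq L$. The complementary set is
\[
\mathcal{B}_L(f)=\{S\mid f(S)=f,\ Y(S)\neq\emptyset,\ \max Y(S)>L\}.
\]
Using the identity $m(S)=\overline{f}-l(S)=\overline{f}-\max Y(S)$ that holds whenever $Y(S)\neq\emptyset$ (stated just before the theorem), the condition $\max Y(S)>L$ is equivalent to $m(S)<\overline{f}-L$. Since $\overline{f}\leq \tfrac{f-1}{2}$, this forces $\tfrac{f}{2}-m(S)>L-\tfrac12$, so $\mathcal B_L(f)$ is contained in
\[
\{S\mid f(S)=f,\ |m(S)-\tfrac{f}{2}|>L-\tfrac12\}.
\]

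Next I would apply Theorem \ref{m near f/2}: for any $\epsilon'>0$ there exists $M$ such that the displayed set above has size at most $\epsilon' \cdot 2^{\overline{f}}$ for every $f$, provided we take $L$ large enough (say $L\ge M+1$). Hence $\#\mathcal B_L(f)\le\epsilon'\cdot 2^{\overline{f}}$.

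Finally I would bring in Theorem \ref{NS with f}: the two limits $c_1,c_2$ are strictly positive, so there is a constant $c>0$ with $N(f)\ge c\cdot 2^{\overline{f}}$ for all sufficiently large $f$ (the finitely many remaining $f$ can be absorbed by enlarging $L$ further, since each individual ratio $\#\mathcal B_L(f)/N(f)$ is zero once $L$ exceeds $\overline{f}$). Therefore
\[
\frac{\#\mathcal B_L(f)}{N(f)}\le \frac{\epsilon'}{c},
\]
and choosing $\epsilon'=c\epsilon$ at the outset gives the bound $1-\epsilon$ on the ratio in the statement. There is no real obstacle here, only the minor bookkeeping of making sure the boundary cases (depth $1$ vs.\ depth $2$, parity of $f$, and the finitely many small $f$ where the asymptotic lower bound $N(f)\ge c\cdot 2^{\overline{f}}$ has not yet kicked in) are all handled uniformly; this is done by choosing $L$ slightly larger than what Theorem \ref{m near f/2} prescribes.
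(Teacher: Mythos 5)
Your proposal is correct and follows exactly the route the paper intends: the paper presents this theorem as a restatement of Theorem \ref{m near f/2} via the dictionary $Y(S)=\emptyset \Leftrightarrow$ depth $1$ or $2$ and $m(S)=\overline{f}-\max Y(S)$ otherwise, which is precisely the translation you carry out. Your handling of the normalization (passing from $2^{\overline{f}}$ to $N(f)$ via the positivity of $c_1,c_2$, with small $f$ absorbed by enlarging $L$ past $\overline{f}$) is sound, though one could shortcut it by noting $N(f)\geq N(\emptyset,f)=2^{\overline{f}}$ directly.
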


In order to study the distribution $n(S)$, we pick a large $L$, restrict ourselves to to numerical semigroups that have $Y(S)=\emptyset$ or $l(S)\leq L$.
For $f>6L+6$, all such numerical semigroups will be of depth $\leq 3$.
We will study the limit of the distribution of $n(S)$ among these semigroups as $f$ goes to infinity.
We already know how $n(S)$ behaves among numerical semigroups of depth $2$, in the next section we will study the ones with depth $3$.

\section{Numerical semigroups of depth 3}

In this section we will describe numerical semigroups of depth $3$. For $Y\neq\emptyset$ we will study the equivalence class of numerical semigroups with $Y(S)=Y$. However we need to further partition these equivalence classes first.

\begin{definition}
For a numerical semigroup $S$ of depth $3$ we define
$$Z(S)= \left\{x-f(S)+\overline{f(S)} \mid x\in S, \frac{f(S)}{2} <x\leq f(S)-m(S) \right\}.$$
\end{definition}

Note that all numbers in $Z(S)$ are non-negative and the largest number in $Z(S)$ is at most
$$\overline{f(S)}- m(S)
=\overline{f(S)}-(\overline{f(S)}-l(S))
=l(S).$$
Moreover $Y(S)\cap Z(S)=\emptyset$, this because if $x\in Y(S)\cap Z(S)$ then $\overline{f(S)}-x\in S$ and $x+f(S)-\overline{f(S)}\in S$.
Adding the two leads to $f(S)\in S$, which is impossible.

\begin{definition}
If $Y$ is a finite non-empty subset of natural numbers, $f>6Max(Y)+6$ and $Z$ is a subset $[0,Max(Y)]$ such that $Z\cap Y=\emptyset$ then we define
$N(Y,Z,f)$ to be the number of numerical semigroups $S$ with Frobenius number $f$, $Y(S)=Y$ and $Z(S)=Z$.
Also let $N(Y,Z,f,n)$ be the number of numerical semigroups with additional condition that $n(S)=n$.
\end{definition}

\begin{example}
In this section we will classify the numerical semigroups with a given $Y$, $Z$ and $f>6Max(Y)+6$.
Let us first consider an example, say $Y=\{2\}$, $Z=\{0\}$ and $f=30$ (so $\overline{f}=14$).
$Y$ tells us that $S\cap [1,14]=\{12\}$, $Z$ tells that $S\cap [16,18]=\{16\}$.
This still leaves the elements in the interval $[19,29]$ to be decided.
However, note that $12\in S$ forces $12+12=24\in S$, we will show that such forced elements correspond to the set $2Y=\{y_1+y_2\mid y_1,y_2\in Y\}$.
Another forced element is $12+16=28$, we will show that such forced elements correspond to $W_1(Y,Z)$ or $W_2(Y,Z)$ (defined below) depending on the parity of $f$.
It can be seen that the remaining numbers in $[19,29]\setminus\{24,28\}$ can be independently included or excluded from $S$.
Therefore, there are $2^{9}$ such numerical semigroups.
\end{example}

Given sets $A,B\subseteq \mathbb{Z}$ and $n\in \mathbb{Z}$ we have the following notation $2A=\{x+y|x,y\in A\}$, $A-B=\{x-y|x\in A, y\in B\}$ and $n-A=\{n-x|x\in A\}$.

\begin{definition}
Given $Y,Z$ we define
$$W_1(Y,Z)=(Y-Z-1)\cap [0,\infty)$$
$$W_2(Y,Z)=(Y-Z-2)\cap [-1,\infty).$$
\end{definition}

\begin{lemma}\label{f necessary}
If $S$ is a numerical semigroup of depth $3$ with $Y(S)=Y$, $Z(S)=Z$, $f(S)=f$ and $l(S)=l$ then
$S\cap \left[1,\overline{f}\right] =\overline{f}-Y$ and
$$S\cap \left[f-\overline{f}, f-\overline{f} +l \right]= Z+f-\overline{f}.$$
Moreover if $f$ is odd then
$$S\cap \left[f-2l-1, f-1 \right] \supseteq(f-1)-(2Y\cup W_1(Y,Z)).$$
And if $f$ is even then
$$S\cap \left[f-2l-2, f-1 \right] \supseteq(f-2)-(2Y\cup W_2(Y,Z)).$$
\end{lemma}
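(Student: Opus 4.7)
The plan is a direct unpacking of the definitions of $Y(S)$ and $Z(S)$, followed by a short closure-under-addition argument to produce the claimed inclusions, with a split on the parity of $f$.

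For the first identity, the map $t\mapsto \overline{f}-t$ is a bijection between $Y$ and $S\cap [1,\overline{f}]$ straight from the definition of $Y(S)$, so $S\cap[1,\overline{f}]=\overline{f}-Y$. For the second identity, I would first observe that in both parities the smallest integer strictly greater than $f/2$ is $f-\overline{f}$, and that $f-m=f-\overline{f}+l$ since $m=\overline{f}-l$. Thus the integers $x$ satisfying $f/2<x\leq f-m$ are exactly those in $[f-\overline{f},f-\overline{f}+l]$, and the substitution $x\mapsto x-f+\overline{f}$ translates this interval to $[0,l]$ and picks out $Z$ from $S$; this gives $S\cap[f-\overline{f},f-\overline{f}+l]=Z+f-\overline{f}$.

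For the closure step, take $y_1,y_2\in Y$ and $y\in Y$, $z\in Z$ with $y>z$. Then $\overline{f}-y_1,\overline{f}-y_2,\overline{f}-y$ and $z+f-\overline{f}$ all lie in $S$ by what we just proved, so the sums $2\overline{f}-(y_1+y_2)$ and $f-(y-z)$ are in $S$. If $f$ is odd then $2\overline{f}=f-1$, so I would rewrite these as $(f-1)-(y_1+y_2)\in (f-1)-2Y$ and $(f-1)-(y-z-1)\in (f-1)-W_1(Y,Z)$, the latter valid whenever $y-z-1\geq 0$, which is exactly the defining condition of $W_1$. Since $y_1+y_2\leq 2l$ and $y-z\leq l$, both elements lie in $[f-2l-1,f-1]$. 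If $f$ is even then $2\overline{f}=f-2$, and the analogous rewriting gives $(f-2)-2Y$ and $(f-2)-W_2(Y,Z)$ inside $S\cap[f-2l-2,f-1]$, the shift by $2$ in $W_2$ and the admissibility range $[-1,\infty)$ being arranged so that the $y=z+1$ case (which contributes $f-1$) is still captured.

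The only real obstacle is the parity bookkeeping: checking that the translates $W_1$ and $W_2$ have been defined precisely to absorb the one-unit discrepancy between $f-1$ and $f-2$, and that the admissibility lower bounds $y-z-1\geq 0$ or $y-z-2\geq -1$ coincide with what closure actually produces. The remaining ingredients are elementary bijections and direct sum-closure; the hypothesis $f>6\max(Y)+6$ ensures that the three subintervals $[1,\overline{f}]$, $[f-\overline{f},f-m]$ and $[f-2l-c,f-1]$ are pairwise disjoint for $c\in\{1,2\}$, so one never has to reconcile two descriptions of the same element.
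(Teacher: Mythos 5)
Your proposal is correct and follows essentially the same route as the paper's proof: the first two identities are read off directly from the definitions of $Y(S)$ and $Z(S)$ (using $m=\overline{f}-l$ and the observation that $f-\overline{f}$ is the least integer exceeding $f/2$), and the inclusions come from adding pairs $\overline{f}-y_1,\overline{f}-y_2$ and pairs $\overline{f}-y,\ z+f-\overline{f}$, with the same parity bookkeeping via $2\overline{f}=f-1$ or $f-2$. The closing remark about disjointness of the three intervals is harmless but not needed, since the third assertion is only a containment.
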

\begin{proof}
The first part is just the definition of $Y$.
The second follows from the definition of $Z$ and the fact that $m=\overline{f}-l$.
We now prove the third one for odd $f$. We have $X=\overline{f}-Y$ and hence $2X=(f-1)-2Y$. Now, $X\subseteq S$ implies $2X\subseteq S$, also $(f-1)-2Y\subseteq [f-1-2l,f-1]$. Finally, a general element of $W_1(Y,Z)$ is of the form $y-z-1$ for $y\in Y$ and $z\in Z$, they must satisfy $y\geq z+1$. Now, $(f-1)-(y-z-1)=(z+f-\overline{f})+(\overline{f}-y)\in S$. Also $0\leq y-z-1\leq l-1$, therefore $(f-1)-(y-z-1)\subseteq [f-l,f-1]$.

For even $f$ we have $X=\overline{f}-Y$, which implies $2X=(f-2)-2Y$, also $(f-2)-2Y\subseteq [f-2-2l,f-2]$.
Next, a general element of $W_2(Y,Z)$ is of the form $y-z-2$ for $y\in Y$ and $z\in Z$, they must satisfy $y\geq z+1$.
Now, $(f-2)-(y-z-2)=(z+f-\overline{f})+(\overline{f}-y)\in S$.
Also $-1\leq y-z-2\leq l-2$, therefore $(f-2)-(y-z-2)\subseteq [f-l,f-1]$.
\end{proof}

\begin{lemma}\label{f odd sufficient}
Given a finite non-empty $Y\subseteq\mathbb{N}$, an odd $f$ such that $f>6Max(Y)+6$ and $Z$ which is a subset of $[0,Max(Y)]$ such that $Z\cap Y=\emptyset$.

For any $T$ which is a subset of
$$[f-\overline{f}+Max(Y)+1, f-1]\setminus \Big( (f-1)-(2Y\cup W_1(Y,Z))\Big).$$
Construct $S$ as
$$S=\{0\}\cup (\overline{f}-Y)\cup (Z+f-\overline{f})\cup T \cup \big((f-1)-(2Y\cup W_1(Y,Z))\big) \cup \{f+1\rightarrow\}.$$
Then $S$ is a numerical semigroup.
\end{lemma}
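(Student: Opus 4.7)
The plan is to verify the three defining properties of a numerical semigroup for the set $S$: containment of $0$, finite complement, and closure under addition. The first is immediate from the construction; the inclusion $\{f+1\rightarrow\}\subseteq S$ makes the complement finite. A brief check also gives $f\notin S$, using $Y\cap Z=\emptyset$ together with the interval bounds on the five nontrivial pieces, which ensures that $f$ is indeed the Frobenius number of $S$. The substance of the proof lies in closure under addition.

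To organize the case analysis I would label the pieces $A_1=\overline{f}-Y$, $A_2=Z+f-\overline{f}$, $A_3=T$, and $A_4=(f-1)-(2Y\cup W_1(Y,Z))$, and record, with $l=\max(Y)$, the intervals
$$A_1\subseteq[\overline{f}-l,\overline{f}],\quad A_2\subseteq[\overline{f}+1,\overline{f}+1+l],\quad A_3,A_4\subseteq[\overline{f}+l+2,f-1].$$
For $A_4$ the right-hand inclusion uses $2Y\cup W_1(Y,Z)\subseteq[0,2l]$ together with $f-1-2l\geq\overline{f}+l+2$, which rearranges to $f\geq 6l+7$ and follows from $f>6l+6$ since $f$ is odd. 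The choice of $T$ makes $A_3\cap A_4=\emptyset$.

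Given $a,b\in S\setminus\{0\}$, if $a+b\geq f+1$ then $a+b\in\{f+1\rightarrow\}\subseteq S$, so I may assume $a+b\leq f$. From the interval bounds combined with $f\geq 6l+7$ one verifies that every pairwise combination of $A_i$'s forces $a+b\geq f+1$, except for the two cases $(A_1,A_1)$ and $(A_1,A_2)$. For $(A_1,A_1)$, write $a=\overline{f}-y_1$, $b=\overline{f}-y_2$; since $f$ odd gives $2\overline{f}=f-1$, we obtain $a+b=(f-1)-(y_1+y_2)\in(f-1)-2Y\subseteq A_4\subseteq S$. For $(A_1,A_2)$, write $a=\overline{f}-y$, $b=z+f-\overline{f}$; then $a+b=f+z-y$, and since $Y\cap Z=\emptyset$ we have $y\ne z$. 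If $z>y$ then $a+b\geq f+1$, and if $z<y$ then $w=y-z-1\geq 0$ lies in $W_1(Y,Z)$, so $a+b=(f-1)-w\in A_4\subseteq S$.

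The main difficulty is not analytic but organizational: one must set up the four pieces so that exactly the pair-sums landing in $[1,f]$ are the $A_1+A_1$ and $A_1+A_2$ cases, and then check that these two cases hit the two parts of $A_4=(f-1)-(2Y\cup W_1(Y,Z))$ precisely. The identities $(\overline{f}-y_1)+(\overline{f}-y_2)=(f-1)-(y_1+y_2)$ and $(\overline{f}-y)+(z+f-\overline{f})=(f-1)-(y-z-1)$ rely on $2\overline{f}=f-1$, i.e.\ the odd-Frobenius hypothesis; this is also what motivates the shift by $-1$ in the definition of $W_1(Y,Z)$, while the analogous even case will use $2\overline{f}=f-2$ with the shift by $-2$ in $W_2(Y,Z)$.
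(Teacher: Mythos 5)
Your proof is correct and follows essentially the same route as the paper: the only pair-sums that can land in $[1,f]$ are $A_1+A_1$ and $A_1+A_2$, and these are absorbed by $(f-1)-2Y$ and $(f-1)-W_1(Y,Z)$ respectively, using $2\overline{f}=f-1$ and $Y\cap Z=\emptyset$ exactly as in the paper's two cases. The only difference is presentational (you rule out the other pairs by explicit interval bounds, the paper by noting one summand must be below $f/2$), so no further comment is needed.
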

\begin{proof}
We need to prove that $S$ is closed under addition.
Consider $x,y\in S$, assume that $x+y\leq f$ and $x,y\neq 0$ because otherwise we have nothing to prove.
At least one of $x,y$ must be less than $\frac{f}{2}$, say $x$, then $x\in \overline{f}-Y$.

Case 1: $y<\frac{f}{2}$ then then $y$ is in $\overline{f}-Y$ as well.
Therefore, $x+y\in 2(\overline{f}-Y)= (f-1)-2Y\subseteq S$.

Case 2: $y>\frac{f}{2}$. Then
$$y\leq f-x\leq f-(\overline{f}-Max(Y))= Max(Y)+f-\overline{f}.$$
This means that $y\in Z+f-\overline{f}$.
Now, $y-f+\overline{f}\in Z$, $\overline{f}-x\in Y$ and $(\overline{f}-x)-(y-f+\overline{f})-1=f-1-x-y\in (Y-Z-1)$.
We know that $x+y\leq f$. If $x+y=f$ then $(\overline{f}-x)=(y-f+\overline{f})$ which would contradict the fact that $Y\cap Z=\emptyset$. Therefore $x+y\leq f-1$, which means $f-1-x-y\in W_1(Y,Z)$. It also implies that $x+y=(f-1)-(f-1-x-y)\in S$.
\end{proof}

\begin{lemma}\label{f even sufficient}
Given a finite non-empty $Y\subseteq\mathbb{N}$, an even $f$ such that $f>6Max(Y)+6$ and $Z$ which is a subset of $[0,Max(Y)]$ such that $Z\cap Y=\emptyset$.

For any $T$ which is a subset of
$$[f-\overline{f}+Max(Y)+1, f-1]\setminus \Big( (f-2)-(2Y\cup W_2(Y,Z))\Big).$$
Construct $S$ as
$$S=\{0\}\cup (\overline{f}-Y)\cup (Z+f-\overline{f})\cup T \cup \big((f-2)-(2Y\cup W_2(Y,Z))\big) \cup \{f+1\rightarrow\}.$$
Then $S$ is a numerical semigroup.
\end{lemma}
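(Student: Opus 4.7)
My plan is to mirror the proof of Lemma \ref{f odd sufficient} line-by-line, adjusting the arithmetic to reflect that for even $f$ we have $2\overline{f}=f-2$ rather than $f-1$. The set $S$ manifestly contains $0$ and the tail $\{f+1\rightarrow\}$, so the only non-trivial property to verify is closure under addition. Fix $x,y\in S$ with $x,y\neq 0$ and $x+y\leq f$ (otherwise $x+y\in\{f+1\rightarrow\}$ and there is nothing to show). One of $x,y$ must be smaller than $f/2$; without loss of generality $x<f/2$, which forces $x\in\overline{f}-Y$.

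The easy case is $y<f/2$: then $y\in\overline{f}-Y$ as well, and $x+y\in 2(\overline{f}-Y)=(f-2)-2Y\subseteq(f-2)-(2Y\cup W_2(Y,Z))\subseteq S$.

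The main case is $y>f/2$. The first step is to pin down which part of $S\cap(f/2,f]$ contains $y$: the bound $y\leq f-x\leq f-\overline{f}+Max(Y)$ rules out $y\in T$, and the hypothesis $f>6Max(Y)+6$ places this bound strictly below $f-2-2Max(Y)$, the minimum of $(f-2)-(2Y\cup W_2(Y,Z))$. Hence $y\in Z+f-\overline{f}$. Writing $y'=\overline{f}-x\in Y$ and $z=y-f+\overline{f}\in Z$, a direct computation yields $y'-z-2=f-2-(x+y)$. The disjointness $Y\cap Z=\emptyset$ forbids $y'=z$ and so rules out $x+y=f$; therefore $x+y\leq f-1$, giving $y'-z-2\geq -1$, so $y'-z-2\in W_2(Y,Z)$. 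Consequently $x+y=(f-2)-(y'-z-2)\in(f-2)-W_2(Y,Z)\subseteq S$.

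The only genuinely new subtlety compared to the odd case is the appearance of $-1$ in the definition of $W_2(Y,Z)$, and I expect this to be the main point requiring care. Specifically, when $x+y=f-1$ the computation produces exactly $y'-z-2=-1$, which corresponds to $y'=z+1$; one has to confirm that this boundary value really does belong to $Y-Z-2$ so that the argument goes through. The asymmetric cutoffs in the definitions of $W_1$ and $W_2$ are then seen to be forced by the even-parity shift $2\overline{f}=f-2$.
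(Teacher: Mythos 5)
Your proof is correct and follows essentially the same argument as the paper: reduce to closure under addition, split on whether $y<f/2$ or $y>f/2$, locate $y$ in $Z+f-\overline{f}$, and use $Y\cap Z=\emptyset$ to rule out $x+y=f$ so that $f-2-x-y\geq -1$ lands in $W_2(Y,Z)$. Your justification that $y$ cannot lie in $T$ or in $(f-2)-(2Y\cup W_2(Y,Z))$ is in fact slightly more explicit than the paper's, and you correctly write $W_2$ in the final step where the paper's proof has a typographical $W_1$.
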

\begin{proof}
We need to prove that $S$ is closed under addition.
Consider $x,y\in S$, assume that $x+y\leq f$ and $x,y\neq 0$ because otherwise we have nothing to prove.
At least one of $x,y$ must be less than $\frac{f}{2}$, say $x$, then $x\in \overline{f}-Y$.

Case 1: $y<\frac{f}{2}$ then then $y$ is in $\overline{f}-Y$ as well.
Therefore, $x+y\in 2(\overline{f}-Y)= (f-2)-2Y\subseteq S$.

Case 2: $y>\frac{f}{2}$. Then
$$y\leq f-x\leq f-(\overline{f}-Max(Y))= Max(Y)+f-\overline{f}.$$
This means that $y\in Z+f-\overline{f}$.
Now, $y-f+\overline{f}\in Z$, $\overline{f}-x\in Y$ and $(\overline{f}-x)-(y-f+\overline{f})-2=f-2-x-y\in (Y-Z-2)$.
We know that $x+y\leq f$. If $x+y=f$ then $(\overline{f}-x)=(y-f+\overline{f})$ which would contradict the fact that $Y\cap Z=\emptyset$. Therefore $x+y\leq f-1$, which means $f-2-x-y\in W_1(Y,Z)$. It also implies that $x+y=(f-2)-(f-2-x-y)\in S$.
\end{proof}

We have characterised the depth $3$ numerical semigroups with a given $f,Y,Z$ we are now going to count them. 
We make the following notations $|2Y\cup W_1(Y,Z)|=\alpha$, $|2Y\cup W_2(Y,Z)|=\alpha'$, $Max(Y)+1-|Y\cup Z|=\beta$. These are all functions of $Y,Z$ of course.

\begin{theorem}\label{depth 3 count}
If $Y$ is a finite, non-empty subset of natural numbers, $f>6Max(Y)+6$ and $Z$ is a subset $[0,Max(Y)]$ such that $Z\cap Y=\emptyset$ and $f$ is odd then
$$N(Y,Z,f)=2^{\overline{f}-Max(Y)-1-\alpha}$$
$$N(Y,Z,f,n)={\overline{f}-Max(Y)-1-\alpha\choose n-Max(Y)-1-\alpha+\beta}.$$
If $f$ is even the replace $\alpha$ with $\alpha'$.
\end{theorem}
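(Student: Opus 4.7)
The plan is to combine Lemma~\ref{f necessary} with Lemma~\ref{f odd sufficient} (respectively Lemma~\ref{f even sufficient}) to place the semigroups enumerated by $N(Y,Z,f)$ in bijection with the subsets of an explicit interval, and then count those subsets. I would treat the odd case first; the even case should be identical after replacing $W_1$ by $W_2$, $f-1$ by $f-2$, and the parity identity $f-\overline{f}=\overline{f}+1$ by $f-\overline{f}=\overline{f}+2$.

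Writing $l=\text{Max}(Y)$, I would introduce the three forced blocks
\[ A_1=\overline{f}-Y,\qquad A_2=Z+f-\overline{f},\qquad A_3=(f-1)-(2Y\cup W_1(Y,Z)),\]
observe that Lemma~\ref{f necessary} forces any admissible $S$ to contain $A_1\cup A_2\cup A_3$, and note that Lemma~\ref{f odd sufficient} shows that for every $T\subseteq I:=[f-\overline{f}+l+1,\,f-1]\setminus A_3$ the set $\{0\}\cup A_1\cup A_2\cup A_3\cup T\cup\{f+1\to\}$ is a valid numerical semigroup. This sets up a bijection between admissible semigroups and subsets of $I$, so $N(Y,Z,f)=2^{|I|}$.

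To determine the exponent I would compute $|I|$: the interval $[f-\overline{f}+l+1,f-1]$ has $\overline{f}-l-1$ elements, and I would check the inclusion $A_3\subseteq[f-\overline{f}+l+1,f-1]$ so that the $\alpha$ forced elements are genuinely removed. Since $2Y\cup W_1(Y,Z)\subseteq[0,2l]$, one has $A_3\subseteq[f-1-2l,f-1]$, and the inclusion reduces to $\overline{f}\geq 3l+2$, which the depth-$3$ hypothesis $f>6l+6$ provides. Hence $|I|=\overline{f}-l-1-\alpha$, yielding the first formula.

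For $N(Y,Z,f,n)$ I would write $S\cap[1,f]=A_1\sqcup A_2\sqcup A_3\sqcup T$ and establish pairwise disjointness using $Y\cap Z=\emptyset$, the bound $\overline{f}\geq 3l+2$, and $f-\overline{f}=\overline{f}+1$. Using $|Y\cup Z|=|Y|+|Z|=l+1-\beta$ this gives $n(S)=(l+1-\beta)+\alpha+|T|$, so the condition $n(S)=n$ pins down $|T|=n-l-1-\alpha+\beta$, and the number of such $T\subseteq I$ is the claimed binomial coefficient. The only step requiring real care is the bookkeeping that $A_1,A_2,A_3,T$ are pairwise disjoint and that $A_3$ actually sits inside the interval being carved; both reduce to the depth-$3$ inequality $f>6l+6$, which is the only place the hypothesis is invoked.
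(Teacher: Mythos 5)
Your proposal is correct and follows exactly the route the paper takes: the paper's proof is literally ``this follows from Lemma~\ref{f necessary}, Lemma~\ref{f odd sufficient} and Lemma~\ref{f even sufficient},'' and you carry out that same combination, merely making explicit the bookkeeping (disjointness of the forced blocks, the containment $A_3\subseteq[f-\overline{f}+\mathrm{Max}(Y)+1,f-1]$, and the size computations) that the paper leaves to the reader. No discrepancies to report.
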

\begin{proof}
This follows from Lemma \ref{f necessary}, Lemma \ref{f odd sufficient} and Lemma \ref{f even sufficient}.
\end{proof}

\section{Monotonicity of $N(f)$}\label{monotone section}


Denote by $N_{mul}(m,f)$ the number of numerical semigroups with Frobenius number $f$ and multiplicity $m$.
We know that
$N(\emptyset,f+2)-N(\emptyset,f)=2^{\overline{f}}$,
and for $\frac{f}{3}<m<\frac{f}{2}$ $N_{mul}(m,f)<N_{mul}(m+1,f+2)$ as
$$N_{mul}(m,f)
=\sum_{Max(Y)=\overline{f}-m} \sum_{Z} 2^{m-1-\alpha}
<\sum_{Max(Y)=\overline{(f+2)}-(m+1)} \sum_{Z} 2^{m-\alpha}$$
$$=N_{mul}(m+1,f+2).$$
Here we use Theorem \ref{depth 3 count}, replace $\alpha$ with $\alpha'$ if $f$ is even.

\begin{lemma}
For $m<\frac{f}{2}$
$$N_{mul}(m,f)\leq \frac{1}{4}2^{\overline{f}}\left(\frac{11}{12}\right)^{\overline{f}-m}.$$
\end{lemma}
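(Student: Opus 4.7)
The plan is to set $k=\overline{f}-m$ and apply Theorem \ref{depth 3 count} to recast the count as a purely combinatorial sum. In the depth-$3$ regime $f/3 < m < f/2$, that theorem gives
\[
N_{mul}(m,f) \;=\; \sum_{\substack{Y:Max(Y)=k}}\ \sum_{\substack{Z\subseteq [0,k]\setminus Y}} 2^{\,m-1-\alpha(Y,Z)} \;=\; 2^{m-1}\,S_k,
\]
(replace $\alpha$ by $\alpha'$ if $f$ is even; the argument is identical), where $S_k:=\sum_{Y,Z}2^{-\alpha(Y,Z)}$. The desired inequality then reduces to the combinatorial bound $S_k \leq \tfrac{1}{2}(11/6)^k$. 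The deeper-multiplicity range $m\leq f/3$ (depth $\geq 4$) needs a separate, much cruder estimate: one can either iterate the $(Y,Z)$-partition one more layer, or observe that once $k\geq f/6$ the right-hand side is exponentially large (it equals $\tfrac{1}{4}(11/6)^{\overline{f}}(12/11)^m$) and comfortably absorbs any direct bound of the form $N_{mul}(m,f) \leq 2^{f-m-1}$.

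For the main case, I would fix $Y$ with $Max(Y)=k$ and analyse the inner sum over $Z$. The key observation is that every $z\in Z$ forces $k-z-1\in W_1(Y,Z)$, and this element is new (not already in $2Y$) exactly when $k-z-1\notin 2Y$. Partitioning $[0,k-1]\setminus Y$ into $P=\{i:k-i-1\notin 2Y\}$ and $Q=\{i:k-i-1\in 2Y\}$, a union bound yields
\[
\sum_{Z} 2^{-\alpha(Y,Z)} \;\leq\; 2^{-|2Y|}\,(3/2)^{|P|}\,2^{|Q|}.
\]
Since $k+Y\subseteq 2Y\cap[k,2k]$ contributes $|Y|$ distinct elements, one has $|2Y|\geq |Y|+\gamma$ with $\gamma:=|2Y\cap[0,k-1]|$; this, together with $|P|+|Q|=k+1-|Y|$ and $|Q|\leq\gamma$, reduces the inner sum to an expression depending only on $|Y|$ and $\gamma$.

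The main obstacle is the outer sum over $Y=\{k\}\cup Y'$, $Y'\subseteq [0,k-1]$, because $\gamma$ depends on the additive structure of $Y'$ rather than just its size, so the sum does not factor cleanly over the elements of $[0,k-1]$. My plan is to split according to whether $0\in Y'$: in that case $Y'\subseteq 2Y'$ forces $\gamma\geq |Y'|$ and one obtains a clean geometric sum, whereas for $0\notin Y'$ one uses $\gamma\geq 0$ together with a sharper control on $|Q|$ (for instance by noting that $Q$ must be contained in $2Y'$ and so $|Q|\leq \binom{|Y'|+1}{2}$). Verifying that these two pieces combine to at most $\tfrac{1}{2}(11/6)^k$, and in particular that the constant $11/12$ is correct rather than the weaker $1$ obtained by ignoring $\gamma$ altogether, may require a direct check of the small cases $k=0,1,2$ to absorb residual lower-order terms.
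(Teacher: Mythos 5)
The paper does not actually prove this lemma; it cites Backelin, so a self-contained derivation from Theorem \ref{depth 3 count} would be a genuinely new route. Unfortunately your plan has two real gaps, and the first is fatal. For $m\le f/3$ (depth $\ge 4$) you propose the crude bound $N_{mul}(m,f)\le 2^{f-m-1}$ and claim the right-hand side absorbs it. It does not: writing $k=\overline{f}-m$ and taking $f$ odd, $2^{f-m-1}=2^{\overline{f}+k}$, while the target is $\tfrac14\,2^{\overline{f}}(11/12)^{k}$, so the crude bound \emph{exceeds} the target by the factor $4\,(24/11)^{k}$, which is exponentially large in $k$, not small. Moreover this is precisely the range in which the paper uses the lemma (the corollary immediately following it sums $N_{mul}(m,f)$ over $m<f/3$), so the case you wave away is the only case the application needs; handling it is essentially the content of Backelin's argument. ``Iterating the $(Y,Z)$ partition one more layer'' is not developed and is not supported by the paper's machinery, which moreover requires $f>6\,Max(Y)+6$, so even within depth $3$ there is a boundary band near $m=f/3$ where Theorem \ref{depth 3 count} does not apply as stated.

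In the main range your reduction to $S_k:=\sum_{Y,Z}2^{-\alpha}\le\tfrac12(11/6)^{k}$ and the union bound $\sum_{Z}2^{-\alpha}\le 2^{-|2Y|}(3/2)^{|P|}2^{|Q|}$ are both correct, but the proposed case split does not close. In the branch $0\notin Y'$ you keep only $|2Y|\ge|Y|$; since $(3/2)^{-|Q|}2^{|Q|}=(4/3)^{|Q|}$, an upper bound on $|Q|$ only helps, and the worst case is $|Q|=0$, where your estimate is $2^{-|Y|}(3/2)^{k+1-|Y|}$. Summing this over $Y'\subseteq[1,k-1]$ gives $\tfrac38\cdot 2^{k}$, which already exceeds the entire budget $\tfrac12(11/6)^{k}$ for $k\ge 4$; no refinement of $|Q|$ can repair a branch whose $|Q|=0$ term is too big, and $\gamma=0$ genuinely occurs (e.g.\ $Y'\subseteq[\lceil k/2\rceil,k-1]$). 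What is missing is a better lower bound on $|2Y|$: combining the sumset inequality $|2Y|\ge 2|Y|-1$ with $|Q|\le\gamma\le|2Y|-|Y|$ turns your union bound into $\tfrac12(3/2)^{k}(2/9)^{|Y|-1}$ for each $Y$, and summing over the $\binom{k}{|Y|-1}$ choices of $Y$ yields exactly $\tfrac12(11/6)^{k}$ with no case split at all. So the depth-$3$ half is salvageable, though not by the route you describe, and the depth-$\ge 4$ half remains entirely open.
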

\begin{proof}
See \cite{Backelin}.
\end{proof}

\begin{corollary}
$$\sum_{m<\frac{f}{3}}N_{mul}(m,f)\leq 3\left(\frac{11}{12}\right)^{\frac{f}{6}-\frac{1}{2}}2^{\overline{f}}.$$
\end{corollary}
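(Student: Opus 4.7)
The plan is to substitute the preceding lemma's bound into the sum term by term and then recognise the result as a geometric sum. Since every $m<f/3$ also satisfies $m<f/2$, the lemma applies to each term, giving
$$\sum_{m<\frac{f}{3}}N_{mul}(m,f) \leq \frac{1}{4}\,2^{\overline{f}} \sum_{m<\frac{f}{3}} \left(\frac{11}{12}\right)^{\overline{f}-m}.$$
After the reindexing $j=\overline{f}-m$, the inner sum becomes a finite geometric sum whose smallest exponent $J$ equals $\overline{f}-m_{\max}$, where $m_{\max}$ is the largest integer strictly less than $f/3$.

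Next I would dominate this finite sum by its infinite tail:
$$\sum_{j\geq J}\left(\frac{11}{12}\right)^{j}=12\left(\frac{11}{12}\right)^{J}.$$
Combining with the prefactor $\tfrac{1}{4}\,2^{\overline{f}}$ yields an upper bound of the shape $3\cdot 2^{\overline{f}}(11/12)^{J}$, and the constant $3$ in the target bound arises exactly from $\tfrac{1}{4}\cdot 12$.

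The remaining step is to verify $J\geq \frac{f}{6}-\frac{1}{2}$, so that $(11/12)^{J}\leq (11/12)^{f/6-1/2}$ by the monotonicity of $x\mapsto (11/12)^{x}$. This follows from $\overline{f}\geq \frac{f-2}{2}$ combined with $m_{\max}\leq \lceil f/3\rceil -1$, and is most safely checked by a short case split on the residue of $f$ modulo $6$. I expect this bookkeeping on the exponent to be the only source of friction; the heavy lifting has already been done in the preceding lemma, so there is no conceptual obstacle.
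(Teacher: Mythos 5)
Your approach is exactly the intended one: the paper states this corollary without proof as an immediate consequence of the preceding lemma, and the constant $3=\frac{1}{4}\cdot 12$ is precisely the geometric-series factor you identify. The substitution of the lemma, the reindexing $j=\overline{f}-m$, and the domination by the infinite tail $\sum_{j\geq J}(11/12)^j=12(11/12)^J$ are all correct.

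The gap is in the final step, which you assert but do not carry out: the inequality $J\geq \frac{f}{6}-\frac{1}{2}$ is \emph{false} when $f\equiv 4\pmod 6$. Write $f=6k+4$; then $\overline{f}=\lfloor\frac{f-1}{2}\rfloor=3k+1$, the largest integer strictly below $\frac{f}{3}=2k+\frac{4}{3}$ is $m_{\max}=2k+1$, hence $J=\overline{f}-m_{\max}=k$, while $\frac{f}{6}-\frac{1}{2}=k+\frac{1}{6}>k$. Running the mod-$6$ case split you propose shows that the uniform truth is only $J\geq \frac{f}{6}-\frac{2}{3}$ (with equality in the residue class $4$), so your argument proves the bound with exponent $\frac{f}{6}-\frac{2}{3}$, which is weaker than the stated one by a factor of $(12/11)^{1/6}\approx 1.015$. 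Using the finiteness of the geometric sum rescues only boundedly many $f$ in the bad residue class, so the clean repair is to weaken the exponent to $\frac{f}{6}-\frac{2}{3}$ (or enlarge the constant to $3(12/11)^{1/6}<3.05$). Note that the corrected version still suffices for the only application in the paper, the proof of Theorem \ref{N(f) is monotonic}: there $f\geq 38$, and $3\left(\frac{11}{12}\right)^{\frac{38}{6}-\frac{2}{3}}=3\left(\frac{11}{12}\right)^{17/3}<1.9$, which is all that proof needs.
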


\begin{proof}[Proof of Theorem \ref{N(f) is monotonic}]
In \cite{OEIS} the value of $N(f)$ are listed for $f\leq 39$, this can thus be checked manually for $f\leq 37$. We now assume $f>37$.
We have computed
$$\sum_{Y:Max(Y)\leq 5}\sum_{Z\subseteq [0,Max(Y)-1],Z\cap Y=\emptyset} 2^{-Max(Y)-1-\alpha(Y,Z)}>1.08$$
$$\sum_{Y:Max(Y)\leq 5}\sum_{Z\subseteq [0,Max(Y)-1],Z\cap Y=\emptyset} 2^{-Max(Y)-1-\alpha'(Y,Z)}>1.06.$$
It follows that
$$\sum_{m>\frac{f}{3}}N(m+1,f+2)-N(m,f)
> 2^{\overline{f}}(1+1.06).$$
Moreover
$$\sum_{m<\frac{f}{3}}N_{mul}(m,f)\leq 3\left(\frac{11}{12}\right)^{\frac{37}{6}-\frac{1}{2}}2^{\overline{f}}
<1.9\times 2^{\overline{f}}.$$
It follows that $N(f)<N(f+2)$.
\end{proof}

\section{Expectation of genus given Frobenius number}\label{average section}

In this section we will use Theorem \ref{depth 2 count} and Theorem \ref{depth 3 count} to find the expected value of genus among numerical semigroups of fixed Frobenius number.
We will thus prove Theorem \ref{average of genus} and Theorem \ref{most genus near 3f/4}.
Before that we give expressions for the constants $c_1,c_2$ from Theorem \ref{NS with f}.

\begin{theorem}
The constants $c_1,c_2$ of Theorem \ref{NS with f} are given by
$$c_1=1+\sum_{Y\neq\emptyset, |Y|<\infty} \sum_{Z\subseteq [0,Max(Y)]\setminus Y} 2^{-Max(Y)-1-\alpha},$$
$$c_2=1+\sum_{Y\neq\emptyset, |Y|<\infty} \sum_{Z\subseteq [0,Max(Y)]\setminus Y} 2^{-Max(Y)-1-\alpha'}.$$
\end{theorem}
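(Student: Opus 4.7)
The plan is to partition the set of numerical semigroups with Frobenius number $f$ by the invariants $(Y(S),Z(S))$, apply the exact count from Theorem \ref{depth 3 count} equivalence class by equivalence class, divide by $2^{\overline{f}}$, and then pass to the limit as $f\to\infty$ (through odd $f$ for $c_1$ and even $f$ for $c_2$), using Theorem \ref{In terms of Y} to exchange limit and infinite sum.

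First I would fix an odd (resp.\ even) $f$ and a threshold $L$, restrict to $f>6L+6$, and write
\[
\frac{N(f)}{2^{\overline{f}}} = \frac{N(\emptyset,f)}{2^{\overline{f}}} + \sum_{\substack{Y\ne\emptyset\\ Max(Y)\le L}}\,\sum_{\substack{Z\subseteq[0,Max(Y)]\setminus Y}}\frac{N(Y,Z,f)}{2^{\overline{f}}} + R(f,L),
\]
where $R(f,L)$ collects the equivalence classes with $Max(Y)>L$. The first term equals $1$ since $N(\emptyset,f)=2^{\overline{f}}$, and for each $(Y,Z)$ with $Max(Y)\le L$ the condition $f>6L+6$ puts us in the regime of Theorem \ref{depth 3 count}, giving $N(Y,Z,f)/2^{\overline{f}} = 2^{-Max(Y)-1-\alpha(Y,Z)}$ in the odd case (and $\alpha'$ in the even case). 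Notably, each such ratio is independent of $f$, so the middle sum is a finite constant depending only on $L$.

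Next I would control $R(f,L)$ uniformly. By Theorem \ref{NS with f}, $N(f)/2^{\overline{f}}$ is bounded by some constant $C$. By Theorem \ref{In terms of Y}, for every $\epsilon>0$ there is an $L$ such that the number of semigroups with $Max(Y(S))>L$ is at most $\epsilon N(f)$, uniformly in $f$; dividing by $2^{\overline{f}}$ gives $R(f,L)\le \epsilon C$. Combining with the identity above, and passing $f\to\infty$ along the relevant parity, yields
\[
\Bigl| c_1 - 1 - \sum_{\substack{Y\ne\emptyset\\ Max(Y)\le L}}\sum_{Z}2^{-Max(Y)-1-\alpha}\Bigr| \le \epsilon C,
\]
with the analogous statement for $c_2$ using $\alpha'$. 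Since the partial sums are non-decreasing in $L$ and bounded above, the infinite series converges; letting $L\to\infty$ and then $\epsilon\to 0$ gives the stated formulas.

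The main obstacle is precisely this interchange of limit and infinite sum. A direct bound on the tail of the series $\sum 2^{-Max(Y)-1-\alpha}$ is not obvious term-by-term (there are exponentially many $(Y,Z)$ of a given $Max(Y)$, so bounding $\alpha\ge 0$ is too weak). The resolution is to route the tail estimate through the semigroup side via Theorem \ref{In terms of Y}, which simultaneously gives convergence of the infinite series and justifies that its value equals $c_i-1$.
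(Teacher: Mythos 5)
Your proposal is correct and follows essentially the same route as the paper: fix $L$ via Theorem \ref{In terms of Y}, evaluate the classes with $Max(Y)\le L$ exactly using $N(Y,Z,f)/2^{\overline{f}}=2^{-Max(Y)-1-\alpha}$ from Theorem \ref{depth 3 count}, bound the tail by $\epsilon$ times a uniform bound on $N(f)/2^{\overline{f}}$, and let $f\to\infty$, then $L\to\infty$, then $\epsilon\to 0$. The only cosmetic difference is that the paper phrases the tail control multiplicatively as $(1-\epsilon)N(f)\le N(\emptyset,f)+\sum_{Max(Y)\le L}N(Y,f)\le N(f)$ rather than via an additive remainder $R(f,L)\le \epsilon C$, which is an equivalent bookkeeping choice.
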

\begin{proof}
Let $\epsilon>0$, consider the $L$ given by Theorem \ref{In terms of Y}. So for each $f$
$$(1-\epsilon) \frac{N(f)}{2^{\overline{f}}}
\leq \frac{N(\emptyset,f)
+\sum_{Y:Max(Y)\leq L} N(Y,f)} {2^{\overline{f}}}
\leq \frac{N(f)}{2^{\overline{f}}}.$$
Now for odd $f>6L+6$
$$\frac{N(\emptyset,f)+\sum_{Y:Max(Y)\leq L} N(Y,f)} {2^{\overline{f}}}
=1+\sum_{Y\neq\emptyset, Max(Y)\leq L} \sum_{Z\subseteq [0,Max(Y)]\setminus Y} 2^{-Max(Y)-1-\alpha}.$$
By letting $f$ tend to infinity we get
$$(1-\epsilon)c_1 \leq
1+\sum_{Y\neq\emptyset, Max(Y)\leq L} \sum_{Z\subseteq [0,Max(Y)]\setminus Y} 2^{-Max(Y)-1-\alpha} 
\leq c_1.$$
The second inequality is true for any $L$, so the sum
$$1+\sum_{Y\neq\emptyset, Max(Y)<\infty} \sum_{Z\subseteq [0,Max(Y)]\setminus Y} 2^{-Max(Y)-1-\alpha}.$$
converges and the value is at most $c_1$. On the other hand we can pick $\epsilon$ to be arbitrarily small, so the sum is exactly $c_1$.
The equation for $c_2$ is obtained similarly by considering even $f$.
\end{proof}

We now compute the average value of genus, remember that $g(S)=f(S)-n(S)$.

\begin{proof}[Proof of Theorem \ref{average of genus}]
Pick $\epsilon>0$, consider the $L$ given by Theorem \ref{In terms of Y}.
Suppose $f>6L+6$. Consider a $Y$ which is a non-empty subset of $[0,L]$, and $Z$ a subset $[0,Max(Y)]\setminus Y$.
If $f$ is odd then the average value of $n(S)$ among numerical semigroups with $f(S)=f$, $Y(S)=Y$, $Z(S)=Z$ is
$$\frac{\overline{f}}{2}+\frac{Max(Y)+1+\alpha}{2}-\beta
=\frac{f}{4}+O_L(1).$$
This was obtained from Theorem $\ref{depth 3 count}$, if $f$ is even then replace $\alpha$ with $\alpha'$.
And by Theorem \ref{depth 2 count} the average of $n(S)$ among numerical semigroups with $f(S)=f$, $Y(S)=\emptyset$ is $\frac{\overline{f}}{2}$.

It follows that the difference between the average of $n(S)$ and $\frac{f}{4}$ is at most $2\epsilon f+O_L(1)$. Since $\epsilon$ was arbitrary we get that the average value of $n(S)$ among numerical semigroups with Frobenius number $f$ is $\frac{f}{4}+o(f)$.
This means the average value of genus is $\frac{3f}{4}+o(f)$.
\end{proof}

Next we will show that for almost all numerical semigroups the genus is close to this value.

\begin{proof}[Proof of Theorem \ref{most genus near 3f/4}]
We need the following property of binomial coefficients: for a fixed large $M$, the distribution $\frac{1}{2^M} {M\choose n}$ is approximately the Gaussian distribution with mean $\frac{M}{2}$ and standard deviation $\sigma_M=\frac{\sqrt{M}}{2}$ by the De Moivre–Laplace theorem. And since $\frac{M^{\frac{1}{2}+\epsilon}}{\sigma_M}$ goes to infinity as $M$ goes to infinity we get
$$\lim_{M\to\infty}\frac{1}{2^M}\sum_{n:|n-\frac{M}{2}|<M^{\frac{1}{2}+\epsilon}} {M\choose n} =1.$$
Now for fixed $Y,Z$ we get by Theorem \ref{depth 3 count} that for any $\epsilon>0$
$$\lim_{f\to\infty} \frac{\#\{S\mid Y(S)=Y, Z(S)=Z, f(S)=f, |n(S)-\frac{f}{4}| < f^{\frac{1}{2}+\epsilon}\}} {N(Y,Z,f)} =1.$$
Also by Theorem \ref{depth 2 count}, for any $\epsilon>0$
$$\lim_{f\to\infty} \frac{\#\{S\mid Y(S)=\emptyset,f(S)=f, |n(S)-\frac{f}{4}| < f^{\frac{1}{2}+\epsilon}\}} {N(\emptyset,f)} =1.$$
Therefore by Theorem \ref{In terms of Y} we conclude that for any $\epsilon>0$
$$\lim_{f\to\infty} \frac{\#\{S\mid f(S)=f, |n(S)-\frac{f}{4}| < f^{\frac{1}{2}+\epsilon}\}} {N(f)} =1.$$
Of course this is equivalent to saying that for any $\epsilon>0$
$$\lim_{f\to\infty} \frac{\#\{S\mid f(S)=f, |g(S)-\frac{3f}{4}| < f^{\frac{1}{2}+\epsilon}\}} {N(f)} =1.$$
\end{proof}

\section{Distribution of genus}
We will now obtain the distribution of the genus among numerical semigroups with a fixed Frobenius number.
We will be using the notation of falling factorials, $[n]_k=(n)(n-1)\dots (n-k+1)$.
Also remember that $g(S)=f(S)-n(S)$.

\begin{theorem}\label{Main Distribution}
Let $\psi_{f}$ be the Gaussian density function with mean $\frac{\overline{f}}{2}$ and variance $\frac{\overline{f}}{4}$. Let $c_1$ be the constant from Theorem \ref{NS with f}. Then for any $\epsilon>0$ there is a $L$ such that for sufficiently large, odd $f$ (and arbitrary $n$) the difference between $\frac{N(f,n)}{N(f)}$ and 
$$\frac{1}{c_1}\psi_f(n)\left(1+\sum_{Y\neq\emptyset, Max(Y)\leq L}\sum_{Z\subseteq [0,Max(Y)]\setminus Y} \left(1-\frac{n}{\overline{f}}\right)^{\beta}\left(\frac{n}{\overline{f}} \right)^{Max(Y)+1+\alpha-\beta} \right) $$
is less than $\epsilon$.

Replace $c_1$ with $c_2$ and $\alpha$ with $\alpha'$ to get the corresponding result for even $f$.
\end{theorem}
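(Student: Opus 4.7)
Fix $\epsilon>0$ and use Theorem \ref{In terms of Y} to choose $L$ so that the equivalence classes with $Max(Y)>L$ collectively contribute at most $\epsilon' N(f)$ for a small $\epsilon'>0$ to be fixed at the end; this automatically bounds their contribution to each individual $N(f,n)$ uniformly in $n$. Restrict to $f>6L+6$, so that every surviving class with $Y\neq\emptyset$ is depth-$3$ and is counted by Theorem \ref{depth 3 count}. Writing $j=j(Y,Z)=Max(Y)+1+\alpha$ in the odd case, this yields the decomposition
\[
N(f,n)=\binom{\overline{f}}{n}+\sum_{\substack{Y\neq\emptyset\\ Max(Y)\leq L}}\sum_{\substack{Z\subseteq[0,Max(Y)]\\Z\cap Y=\emptyset}}\binom{\overline{f}-j}{n-j+\beta}+R(f,n),
\]
where the first term is the depth-$\leq 2$ count from Theorem \ref{depth 2 count} and $R(f,n)\leq \epsilon' N(f)$ uniformly in $n$.

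\textbf{Binomial asymptotics.} I plan to combine two standard estimates. First, the local central limit theorem (De Moivre--Laplace) gives $\binom{\overline{f}}{n}/2^{\overline{f}}=\psi_f(n)+o(\overline{f}^{-1/2})$ uniformly in $n$. Second, a direct factorial identity gives, for each $(Y,Z)$,
\[
\frac{\binom{\overline{f}-j}{n-j+\beta}}{\binom{\overline{f}}{n}}=\frac{[n]_{j-\beta}\,[\overline{f}-n]_{\beta}}{[\overline{f}]_j}.
\]
Since $j\leq 2L+2$ is bounded across the truncated family, for $n$ in the bulk window $|n-\overline{f}/2|\leq \overline{f}^{1/2+\delta}$ (any small $\delta>0$) this ratio equals $(n/\overline{f})^{j-\beta}(1-n/\overline{f})^{\beta}\bigl(1+O_L(\overline{f}^{-1/2+\delta})\bigr)$ by expanding each falling factorial around $\overline{f}^{j-\beta}$, $\overline{f}^{\beta}$, $\overline{f}^j$. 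Multiplying and then dividing by $N(f)/2^{\overline{f}}\to c_1$ from Theorem \ref{NS with f}, for $n$ in the bulk I obtain
\[
\frac{N(f,n)}{N(f)}=\frac{\psi_f(n)}{c_1}\Biggl(1+\sum_{\substack{Y\neq\emptyset\\ Max(Y)\leq L}}\sum_{Z}\Bigl(\tfrac{n}{\overline{f}}\Bigr)^{j-\beta}\Bigl(1-\tfrac{n}{\overline{f}}\Bigr)^{\beta}\Biggr)+o(1)+O(\epsilon'),
\]
which matches the desired expression.

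\textbf{Tail regime.} For $n$ outside the bulk I will handle the two sides separately: $N(f,n)/N(f)$ is negligible there by Theorem \ref{most genus near 3f/4}, while $\psi_f(n)$ is Gaussian-tail small and each factor $(n/\overline{f})^{j-\beta}(1-n/\overline{f})^{\beta}\leq 1$, so the approximating expression is also negligible and the difference is automatically under $\epsilon$. Fixing $\epsilon'=\epsilon/3$ and taking $f$ large enough that the $o(1)$ term in the bulk is below $\epsilon/3$ closes the argument. The even-$f$ case is identical after replacing $\alpha$ by $\alpha'$ and $c_1$ by $c_2$.

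\textbf{Main obstacle.} The principal difficulty is uniformity in $n$: the multiplicative error in the falling-factorial-to-power step degrades as $n$ approaches the edge of the bulk, and the additive error from De Moivre--Laplace must be weighed against $\psi_f(n)$, which itself is only of size $\overline{f}^{-1/2}$. The finiteness of the family $\{(Y,Z):Max(Y)\leq L\}$ is essential here: it lets me absorb the $(Y,Z)$-dependence of error constants into a single bound depending on $L$ alone, so the bulk error can be driven to $0$ by taking $f$ large once $L$ is fixed by Theorem \ref{In terms of Y}.
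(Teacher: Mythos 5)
Your proposal follows essentially the same route as the paper: the same truncation via Theorem \ref{In terms of Y}, the same falling-factorial identity converting $\binom{\overline{f}-j}{n-j+\beta}$ into $(n/\overline{f})^{j-\beta}(1-n/\overline{f})^{\beta}\binom{\overline{f}}{n}$ up to $o(1)$, and the same final appeal to De Moivre--Laplace. Your explicit bulk/tail split (using Theorem \ref{most genus near 3f/4} and the Gaussian decay to handle $n$ outside the central window) is a welcome sharpening of the uniformity-in-$n$ claim that the paper's proof passes over with the parenthetical ``$n$ is allowed to vary with $f$,'' but it does not change the substance of the argument.
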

\begin{proof}
Given $\epsilon>0$ consider the $L$ given by Theorem \ref{In terms of Y}. Also consider $Y,Z$ with $Max(Y)\leq L$. We have
$${\overline{f}-l-1-\alpha\choose n-l-1-\alpha+\beta}
= \frac{[n]_{l+1+\alpha-\beta}}{\left[\overline{f}-\beta\right]_{l+1+\alpha-\beta}}
{\overline{f}-\beta\choose n} $$
$$= \frac{[n]_{l+1+\alpha-\beta}}{\left[\overline{f}-\beta\right]_{l+1+\alpha-\beta}}
\frac{[\overline{f}-n]_{\beta}}{[\overline{f}]_{\beta}}
{\overline{f}\choose n} 
=\frac{\prod_{i=0}^{\beta-1} (\overline{f}-n-i) \prod_{i=0}^{l+\alpha-\beta} (n-i)} {\prod_{i=0}^{l+\alpha} (\overline{f}-i)}
{\overline{f}\choose n}$$
$$=\frac{\prod_{i=0}^{\beta-1} (1-\frac{n}{\overline{f}}-\frac{i}{\overline{f}}) \prod_{i=0}^{l+\alpha-\beta} (\frac{n}{\overline{f}}-\frac{i}{\overline{f}})} {\prod_{i=0}^{l+\alpha} (1-\frac{i}{\overline{f}})} 
{\overline{f}\choose n}.$$
Therefore once we fix $l,\alpha,\beta$ we get the following limit ($n$ is allowed to vary with $f$)
$$\lim_{f\to\infty}\frac{1}{2^{\overline{f}}} {\overline{f}-l-1-\alpha\choose n-l-1-\alpha+\beta}
- \frac{1}{2^{\overline{f}}}
{\overline{f}\choose n}
\left(1-\frac{n}{\overline{f}}\right)^{\beta} \left(\frac{n}{\overline{f}}\right)^{l+1+\alpha-\beta}
=0.$$
Denote $h_L(x)=
1+\sum_{Max(Y)\leq L}\sum_{Z\subseteq [0,Max(Y)]\setminus Y} \left(1-x\right)^{\beta}x^{Max(Y)+1+\alpha-\beta}$
It follows by Theorem \ref{depth 2 count} and Theorem \ref{depth 3 count} that
$$\limsup_{f\to\infty}\left|\frac{N(f,n)}{N(f)}
-\frac{1}{c_12^{\overline{f}}} {\overline{f}\choose n} h_L\left(\frac{n}{\overline{f}}\right)\right| 
< \epsilon.$$
Now we are done by the De Moivre–Laplace theorem which implies that the binomial distribution $\frac{1}{2^{\overline{f}}}{\overline{f}\choose n}$ is approximately the Gaussian distribution with mean $\frac{\overline{f}}{2}$ and variance $\frac{\overline{f}}{4}$ for large $f$.
\end{proof}

For Frobenius numbers 19,29 we plot the distribution given in Theorem \ref{Main Distribution} with $L=2$ along with the actual distribution of $n(S)$.
The polynomial for $L=2$ is
$$1+\sum_{Max(Y)\leq 2}\sum_{Z\subseteq [0,Max(Y)]\setminus Y} \left(1-x\right)^{\beta} x^{Max(Y)+1+\alpha-\beta}
=1+2x^2-x^3+4x^4-2x^5+2x^6,$$

\includegraphics[width=0.85\textwidth]{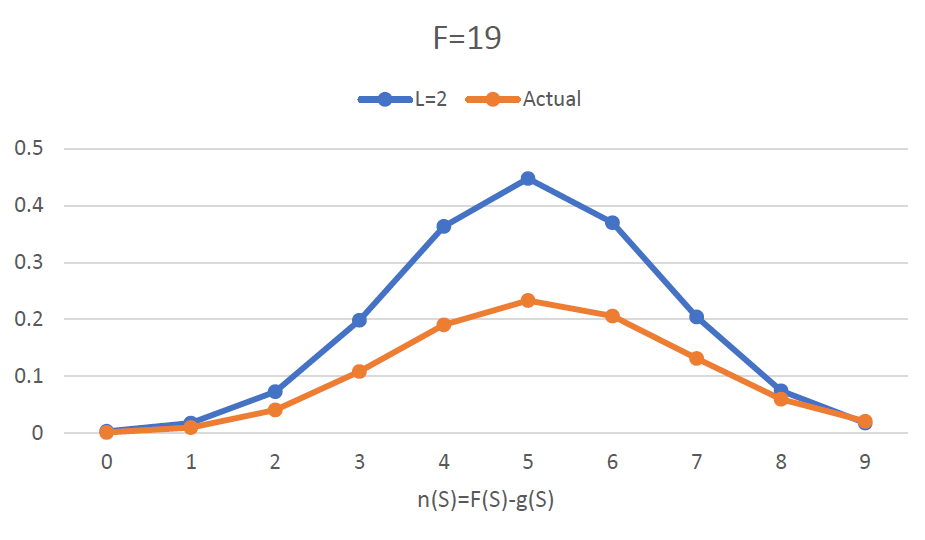}

\includegraphics[width=0.85\textwidth]{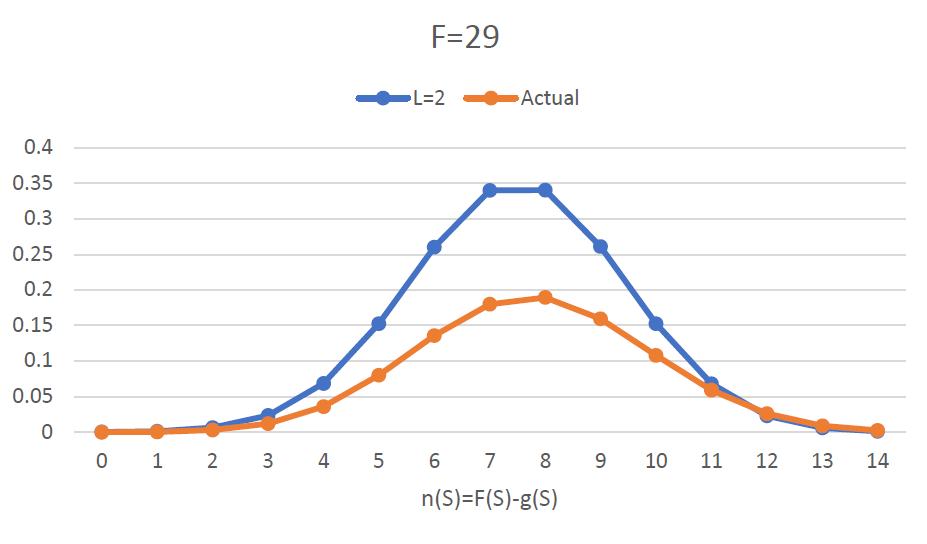}

\section{Max embedding dimension}

Let $MED(f)$ be the number of max embedding dimension numerical semigroups with Frobenius number $f$.
Let $MED(m,f)$ be the number of max embedding dimension numerical semigroups with Frobenius number $f$ and multiplicity $m$.

\begin{theorem}
Let $S$ be a numerical semigroup with multiplicity $m$. Then $S$ is of max embedding dimension if and only if $(S\setminus \{0\})-m$ is a numerical semigroup.
\end{theorem}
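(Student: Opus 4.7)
My plan is to translate both sides of the equivalence via the Ap\'ery set $\operatorname{Ap}(S,m)=\{s\in S\mid s-m\notin S\}$, which contains $0$ and has exactly $m$ elements, one per residue class modulo $m$. Setting $T=(S\setminus\{0\})-m$, we have $0\in T$ (since $m\in S$) and $T$ has finite complement in $\mathbb{N}$, so $T$ is a numerical semigroup if and only if it is closed under addition, which unwinds to the condition
$$s_1+s_2-m\in S\quad\text{for all }s_1,s_2\in S\setminus\{0\}.$$
On the other hand, using the standard fact that every minimal generator $g\neq m$ satisfies $g-m\notin S$ (else $g=(g-m)+m$), the minimal generators of $S$ other than $m$ lie in $\operatorname{Ap}(S,m)\setminus\{0\}$, giving $e(S)\leq m$ with equality precisely when no $w\in\operatorname{Ap}(S,m)\setminus\{0\}$ can be written as a sum of two nonzero elements of $S$.

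For the ``if'' direction, suppose $T$ is a numerical semigroup and, towards a contradiction, some $w\in\operatorname{Ap}(S,m)\setminus\{0\}$ equals $a+b$ with $a,b\in S\setminus\{0\}$; the sum condition above gives $w-m=a+b-m\in S$, contradicting $w\in\operatorname{Ap}(S,m)$. Hence $S$ is of max embedding dimension.

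For the ``only if'' direction, assume $S$ is MED and pick $s_1,s_2\in S\setminus\{0\}$. Write $s_i=w_i+k_im$ with $w_i\in\operatorname{Ap}(S,m)$ and $k_i\in\mathbb{Z}_{\geq 0}$. If some $k_i\geq 1$ then $s_i-m\in S$ and $s_1+s_2-m=(s_i-m)+s_{3-i}\in S$. In the remaining case, $s_i=w_i\in\operatorname{Ap}(S,m)\setminus\{0\}$ for $i=1,2$; decomposing $w_1+w_2=w+km$ with $w\in\operatorname{Ap}(S,m)$ and $k\geq 0$, the MED hypothesis rules out $k=0$ (otherwise $w=w_1+w_2$ would sit in $\operatorname{Ap}(S,m)\setminus\{0\}$ yet fail to be a minimal generator), so $k\geq 1$ and $s_1+s_2-m=w+(k-1)m\in S$. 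The only delicate point is keeping this final case analysis clean; once the Ap\'ery-set language is in place the result is forced.
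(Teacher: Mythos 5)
Your proof is correct and complete. Note that the paper does not actually prove this statement; it simply cites the standard reference (Rosales and Garc\'ia-S\'anchez, \emph{Numerical Semigroups}), so there is no in-paper argument to compare against. Your Ap\'ery-set argument is essentially the textbook proof: the translation of ``$T=(S\setminus\{0\})-m$ is closed under addition'' into ``$s_1+s_2-m\in S$ for all nonzero $s_1,s_2\in S$'' is right (since $s_1+s_2-m\geq m>0$, landing in $S$ automatically means landing in $S\setminus\{0\}$), and the characterization of MED as ``no element of $\operatorname{Ap}(S,m)\setminus\{0\}$ is a sum of two nonzero elements of $S$'' is exactly the right bridge. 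The only edge case worth a glance is $s_i=m$ in the ``only if'' direction, where $s_i-m=0$; your phrasing $s_1+s_2-m=(s_i-m)+s_{3-i}$ still lands in $S$ because $0\in S$, so nothing breaks. In the final subcase, $w=w_1+w_2$ is nonzero since $w_1,w_2>0$, so it genuinely lies in $\operatorname{Ap}(S,m)\setminus\{0\}$ and the contradiction with MED goes through. A clean, self-contained proof of a fact the paper leaves to the literature.
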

\begin{proof}
See \cite{NS Book}.
\end{proof}

\begin{corollary}\label{f-m}
$MED(m,f)$ is equal to the number of numerical semigroups that contain $m$ and have Frobenius number $f-m$.
\end{corollary}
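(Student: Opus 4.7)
The plan is to establish an explicit bijection between max embedding dimension numerical semigroups with multiplicity $m$ and Frobenius number $f$, and numerical semigroups that contain $m$ and have Frobenius number $f-m$. The bijection is the one suggested by the preceding theorem: send $S$ to $T := (S\setminus\{0\}) - m$.

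In the forward direction, I would take an MED numerical semigroup $S$ with multiplicity $m$ and Frobenius number $f$ and verify three things about $T$. First, $T$ is a numerical semigroup by the previous theorem. Second, $T$ contains $m$, because $m\in S$ implies $2m\in S$ (closure), so $2m-m=m\in T$. Third, the Frobenius number of $T$ is $f-m$: a positive integer $n$ is a gap of $T$ if and only if $n+m\notin S$, and the largest such $n$ is precisely $f-m$, since $f$ is the largest gap of $S$ (note $f>m-1$, so $f-m\geq 0$).

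For the inverse, given a numerical semigroup $T$ that contains $m$ and has Frobenius number $f-m$, I would define $S:=\{0\}\cup(T+m)$ and check in order: closure under addition (the only non-trivial case is $(t_1+m)+(t_2+m)=(t_1+t_2+m)+m$, and $t_1+t_2+m\in T$ because $t_1,t_2,m\in T$ and $T$ is a semigroup — this is exactly where the hypothesis $m\in T$ is used); finite complement (inherited from $T$); multiplicity equal to $m$ (since $0\in T$ gives $m\in T+m$, and all other elements of $T+m$ are $\geq m$); Frobenius number equal to $f$ (the gaps of $S$ are $\{1,\dots,m-1\}$ together with $\{n+m\mid n$ is a gap of $T\}$, so the largest is $(f-m)+m=f$); and finally the MED property, which follows from the previous theorem because $(S\setminus\{0\})-m=T$ is a numerical semigroup by assumption.

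The two constructions are mutually inverse by direct inspection, which yields the stated count. There is no real obstacle here; the only point requiring any care is recognizing that the hypothesis $m\in T$ in the target set is exactly what is needed to make $\{0\}\cup(T+m)$ closed under addition, matching the necessary condition $m\in T$ derived in the forward direction from $2m\in S$.
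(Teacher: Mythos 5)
Your proof is correct and is exactly the argument the paper intends: the corollary is stated there without proof as an immediate consequence of the preceding theorem, via the bijection $S\mapsto (S\setminus\{0\})-m$ with inverse $T\mapsto\{0\}\cup(T+m)$ that you spell out. The only slight imprecision is the assertion that $f>m-1$; equality $f=m-1$ can occur (e.g. $S=\{0,m\rightarrow\}$, which is of max embedding dimension), but the correspondence still holds there under the convention that $\mathbb{N}$ is the unique numerical semigroup with Frobenius number $-1$.
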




To prove Theorem \ref{bounds for MED by Frob} we will need some results from \cite{Backelin}.

\begin{lemma}
For every positive integer $f$
$$2^{\overline{f}}\leq N(f)<4\times 2^{\overline{f}}.$$
\end{lemma}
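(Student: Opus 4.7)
The plan is to combine the exact depth-$\le 2$ count already recorded in the paper with the Backelin multiplicity bound just cited, organised by multiplicity $m$.

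For the lower bound, Theorem \ref{depth 2 count} provides $2^{\overline{f}}-1$ depth-$2$ numerical semigroups with Frobenius number $f$, and the depth-$1$ semigroup $\{0\}\cup\{f+1,f+2,\ldots\}$ supplies one more, so $N(f)\ge 2^{\overline{f}}$.

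For the upper bound I would partition the semigroups of Frobenius number $f$ by multiplicity. Those with $m>f/2$ are exactly the depth-$\le 2$ ones and contribute $2^{\overline{f}}$. When $f$ is even, the case $m=f/2$ is impossible: if $m\in S$ then $2m=f\in S$, contradicting that $f$ is the Frobenius number. Every remaining semigroup therefore has $m<f/2$, and the Backelin bound $N_{mul}(m,f)\le \tfrac{1}{4} 2^{\overline{f}}(11/12)^{\overline{f}-m}$ applies. Summing over $m\in\{2,\ldots,\overline{f}\}$ gives a truncated geometric series in $11/12$, strictly less than its infinite value $12$, so
\[
\sum_{m<f/2} N_{mul}(m,f)<\tfrac{1}{4}\cdot 12\cdot 2^{\overline{f}}=3\cdot 2^{\overline{f}}.
\]
Adding the two contributions yields $N(f)<2^{\overline{f}}+3\cdot 2^{\overline{f}}=4\cdot 2^{\overline{f}}$, the strict inequality coming from the geometric truncation.

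The three ranges $m>f/2$, $m=f/2$, and $m<f/2$ are handled respectively by Theorem \ref{depth 2 count}, a one-line closure argument, and the cited Backelin estimate, so no real obstacle arises. The only nontrivial input is Backelin's bound on $N_{mul}(m,f)$ itself; reproving it in this paper's notation would be the only remaining work if one wanted to avoid citing that reference.
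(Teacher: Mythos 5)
Your proposal is correct, but it is a genuinely different route from the paper's, because the paper offers no argument at all here: its ``proof'' of this lemma is the single line ``See \cite{Backelin}.'' You instead assemble a derivation from ingredients already present in the paper: the exact count $2^{\overline{f}}-1$ of depth-$2$ semigroups from Theorem \ref{depth 2 count} plus the unique depth-$1$ semigroup gives the lower bound $2^{\overline{f}}$ and accounts exactly for all multiplicities $m>\frac{f}{2}$; the case $m=\frac{f}{2}$ is excluded by the closure observation; and the remaining multiplicities $m<\frac{f}{2}$ are controlled by the Backelin estimate $N_{mul}(m,f)\leq \frac{1}{4}2^{\overline{f}}(11/12)^{\overline{f}-m}$, which the paper separately records as a lemma in Section \ref{monotone section}. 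The geometric sum is bounded by $\frac{1}{4}\cdot 12\cdot 2^{\overline{f}}=3\cdot 2^{\overline{f}}$ (strictly, since the sum is truncated), and the ranges $m>\frac{f}{2}$, $m=\frac{f}{2}$, $m<\frac{f}{2}$ are exhaustive, so $N(f)<4\cdot 2^{\overline{f}}$ follows. There is no circularity, since Backelin's multiplicity bound is independent of the total count. What your approach buys is a self-contained proof within the paper's framework, reducing the external dependence to the single multiplicity estimate; what the paper's citation buys is brevity. The only residual external input in your argument is exactly the one you flag, namely the $(11/12)^{\overline{f}-m}$ bound itself.
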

\begin{proof}
See \cite{Backelin}.
\end{proof}
\begin{corollary}
For every positive integer $f$
$$\frac{1}{2} 2^{\frac{f}{2}} \leq N(f) <4\times2^{\frac{f}{2}}.$$
\end{corollary}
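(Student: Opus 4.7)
The plan is to deduce this corollary directly from the preceding lemma by a parity case analysis, since $\overline{f}$ differs from $f/2$ by a bounded amount depending only on the parity of $f$. The core observation is that $2^{\overline{f}}$ and $2^{f/2}$ differ by a multiplicative factor of either $\sqrt{2}$ or $2$, so the constants $1$ and $4$ from the lemma need only be adjusted to $1/2$ and $4$.

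First I would handle the odd case: write $f = 2k+1$ so that $\overline{f} = k = (f-1)/2$ and $2^{\overline{f}} = 2^{f/2}/\sqrt{2}$. The lower bound from the lemma gives $N(f) \geq 2^{\overline{f}} = 2^{f/2}/\sqrt{2} \geq \tfrac{1}{2} \cdot 2^{f/2}$, and the upper bound yields $N(f) < 4 \cdot 2^{\overline{f}} = 2\sqrt{2} \cdot 2^{f/2} < 4 \cdot 2^{f/2}$. Then for the even case: write $f = 2k$ so that $\overline{f} = k-1 = f/2 - 1$ and $2^{\overline{f}} = 2^{f/2}/2$. The lower bound becomes $N(f) \geq 2^{\overline{f}} = \tfrac{1}{2} \cdot 2^{f/2}$, matching the claim with equality in the constant, and the upper bound gives $N(f) < 4 \cdot 2^{\overline{f}} = 2 \cdot 2^{f/2} < 4 \cdot 2^{f/2}$.

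There is no real obstacle here; the only thing to be careful about is verifying that $1/\sqrt{2} \geq 1/2$ for the odd lower bound, which is immediate, and ensuring that in both parities the resulting constants fit inside the looser bounds $1/2$ and $4$ claimed in the corollary. The proof is a two-line case split followed by substitution into the lemma.
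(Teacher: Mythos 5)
Your proof is correct and follows the same route the paper intends: the corollary is stated without proof precisely because it is the immediate consequence of the preceding lemma via $\overline{f}=(f-1)/2$ for odd $f$ and $\overline{f}=f/2-1$ for even $f$. Your parity case split and constant-tracking are exactly right.
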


\begin{lemma}
Given positive integers $m,f$ such that $m<\frac{f}{4}$ we have
$$N_{mul}(m,f)< 0.071\times 2^{\frac{f}{2}}\left(\frac{13}{16}\right)^{\frac{f}{8}}2^{-0.628(\frac{f}{4}-m)}.$$
\end{lemma}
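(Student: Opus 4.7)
The plan is to follow Backelin's level-decomposition approach from \cite{Backelin}, which the paper is already citing for the adjacent lemmas. For a numerical semigroup $S$ with multiplicity $m$ and Frobenius number $f$ satisfying $m < f/4$, write $q = \lceil (f+1)/m \rceil$ for the depth, so $q \geq 5$. For $k = 0, 1, \ldots, q-1$ define $A_k = \{x - km : x \in S \cap [km, (k+1)m-1]\} \subseteq [0, m-1]$. The semigroup axioms translate into $A_0 = \{0\}$, $A_i \subseteq A_{i+1}$, and whenever $i + j \leq q-1$ every element of $A_i + A_j$ lying in $[0, m-1]$ must sit in $A_{i+j}$, while carry terms in $[m, 2m-1]$ must sit in $A_{i+j+1}$, and so on. Specifying $S$ is thus equivalent to specifying the chain $(A_1, \ldots, A_{q-1})$ together with a boundary choice in $[(q-1)m, f]$ enforcing $f \notin S$.

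First I would estimate the count by fixing $a = |A_1|$ and bounding it by $\binom{m-1}{a-1} \cdot C(a, q, m)$, where $C(a, q, m)$ is the number of valid continuations $A_2, \ldots, A_{q-1}$. Once $A_1$ is fixed, iterated sumsets yield $kA_1 \subseteq A_k$, so $|A_k| \geq k(a-1) + 1$, shrinking the free portion $A_k \setminus kA_1$ that one may independently choose at level $k$. Thus $C(a, q, m)$ is bounded by a product of binomial coefficients, and the product is dominated by $2^{m-1}$ corrected by factors strictly less than $1$ at each of the $q - 1 \geq 4$ intermediate levels. Careful bookkeeping compounds these corrections into the universal factor $(13/16)^{f/8}$.

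The second step is to extract the additional exponential $2^{-0.628(f/4 - m)}$, which reflects the extra suppression when $m$ drops further below $f/4$. Since $q = \lceil (f+1)/m \rceil$ grows as $m$ decreases, each additional unit of $f/4 - m$ forces roughly an extra stratum, contributing a further multiplicative correction from the sumset constraints. Tracking these by optimizing the profile $(|A_1|, |A_2|, \ldots, |A_{q-1}|)$ via an entropy/convexity argument yields the exponent $0.628$; the residual $0.071$ prefactor comes from bounding the boundary flexibility near $I_{q-1}$ and absorbing subleading terms into the main decay.

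The main obstacle, in my view, is not conceptual but numerical: pinning down the exact constants $13/16$ and $0.628$ requires a delicate optimization over the admissible profiles $(|A_k|)$ subject to the iterated sumset constraints $kA_1 \subseteq A_k$ and $A_i + A_j \subseteq A_{i+j} \cup (A_{i+j+1} + m) \cup \cdots$. Since the paper defers to \cite{Backelin} for the bound, my plan would be to reconstruct Backelin's optimization as is, rather than attempt to sharpen it.
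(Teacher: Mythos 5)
The paper does not actually prove this lemma: its entire proof is the citation ``See \cite{Backelin}'', importing the bound as a black box from Backelin's Proposition~2 and its surrounding estimates. Your plan, which ends with ``reconstruct Backelin's optimization as is,'' is therefore in effect the same move the paper makes, dressed up with a sketch of what Backelin's argument looks like. The sketch itself is a reasonable description of the method (stratify $S$ into level sets $A_k \subseteq [0,m-1]$ by residue block, use the semigroup axiom to force sumset containments, and bound the number of admissible chains), so there is no conceptual disagreement with the source.

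However, judged as a proof rather than as a citation, the proposal has a genuine gap: the entire quantitative content of the lemma --- the constants $13/16$, $0.628$, and $0.071$ --- is never derived, only asserted to follow from ``careful bookkeeping'' and an unspecified ``entropy/convexity argument.'' Since the lemma is nothing but these constants (a qualitative statement that $N_{mul}(m,f)$ decays when $m < f/4$ is far weaker and would not suffice for the upper bound $MED(f) < c' 2^{0.41385 f}$ in Theorem~\ref{bounds for MED by Frob}, whose exponent is tuned to exactly this decay rate), omitting the optimization omits the proof. One technical point in your sketch also needs repair before the bookkeeping could even begin: from $A_1 \subseteq [0,m-1]$ you cannot conclude $kA_1 \subseteq A_k$ and hence $|A_k| \geq k(a-1)+1$, because elements of the iterated sumset that exceed $m-1$ overflow into the higher strata $A_{k+1}, A_{k+2}, \dots$ rather than staying in $A_k$; the correct constraint is that $(A_i + A_j) \cap [0,m-1] \subseteq A_{i+j}$ with the carries pushed upward, and the lower bound on $|A_k|$ must be argued through that weaker containment. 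If your intention is to cite Backelin for the lemma, as the paper does, the proposal is fine as it stands; if your intention is to supply a self-contained proof, the optimization over profiles is the proof and must be carried out.
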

\begin{proof}
See \cite{Backelin}.
\end{proof}

\begin{theorem}\label{bounds for MED by Frob}
There are constants $c$, $c'$ such that
$$c2^{\frac{1}{3} f}<MED(f)<c'2^{0.41385f}.$$
\end{theorem}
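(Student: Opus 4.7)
The plan is to use Corollary \ref{f-m} throughout, identifying $MED(m,f)$ with the number of numerical semigroups $S'$ of Frobenius number $f-m$ that contain $m$, and then to prove the two inequalities separately.

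For the lower bound, I choose $m=\lfloor f/3\rfloor+1$, so that $3m>f$, equivalently $2m>f-m$. For any subset $T\subseteq[m+1,f-m-1]$, consider
$$S'=\{0,m\}\cup T\cup[f-m+1,\infty).$$
Every sum of two nonzero elements of $S'$ is at least $2m>f-m$, hence lies in $[f-m+1,\infty)\subseteq S'$, so $S'$ is closed under addition; by construction $f(S')=f-m$ and $m\in S'$. By Corollary \ref{f-m}, distinct choices of $T$ yield distinct MED numerical semigroups with Frobenius $f$. Since $[m+1,f-m-1]$ has $f-2m-1$ elements, this gives $MED(f)\geq 2^{f-2m-1}$, which is $\Omega(2^{f/3})$ and absorbing a bounded constant produces the desired $c\cdot 2^{f/3}$.

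For the upper bound, I decompose $MED(f)=\sum_{m=1}^{f+1}MED(m,f)$ and split at $m=\lfloor f/2\rfloor$. When $m>f/2$ the condition $m\in S'$ is automatic (since $m>f-m=f(S')$), so $MED(m,f)=N(f-m)$; applying $N(j)<4\cdot 2^{j/2}$ bounds this tail by $O(2^{f/4})$, which is negligible. When $m\leq f/2$ the condition $m\in S'$ forces $m(S')\leq m$, so
$$MED(m,f)\leq\sum_{m'=1}^{m}N_{mul}(m',f-m).$$
I split this inner sum at $m'=(f-m)/4$: for $m'<(f-m)/4$, I plug in Backelin's refined bound, which is geometric in $m'$ and dominated by its top term, giving a contribution of the form $C\cdot 2^{0.3056(f-m)+0.628m}$ or $C\cdot 2^{0.4626(f-m)}$ depending on whether the endpoint of the sum is $m$ or $(f-m)/4$; for $m'\geq(f-m)/4$, I use the crude $\sum_{m'}N_{mul}(m',f-m)\leq N(f-m)<4\cdot 2^{(f-m)/2}$. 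Summing these expressions over $m\in[1,f/2]$ (each a further geometric sum in $m$) and maximising the resulting exponents yields $MED(f)<c'\cdot 2^{0.41385 f}$.

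The main obstacle is the upper-bound optimisation. The Backelin piece dominates for small $m$, while the crude piece $2^{(f-m)/2}$ dominates once $m$ is not too small; one must carry the two geometric sums through carefully, then take a maximum of the exponents $0.3056 f+0.3224 m$, $0.4626(f-m)$, and $(f-m)/2$ across $m\in[1,f/2]$. The specific numerical constant $0.41385$ is what falls out of this joint calculation and is sensitive to the exact choice of splitting, whereas the lower bound is a direct explicit construction and is robust.
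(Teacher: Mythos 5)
Your lower bound is essentially the paper's: both choose $m\approx f/3$ and count, via Corollary \ref{f-m}, the depth-$2$ numerical semigroups with Frobenius number $f-m<2m$ that contain $m$; your version merely writes the construction out explicitly, and it is correct. Your upper bound, however, takes a genuinely different route. The paper splits at $m=uf$ with $u=0.1723$, bounds $MED(m,f)\leq N_{mul}(m,f)$ (multiplicity $m$, Frobenius number $f$) for $m<uf$ using Backelin's refined bound at the \emph{original} Frobenius number, and uses $MED(m,f)\leq N(f-m)<4\cdot 2^{(f-m)/2}$ for $m\geq uf$; the constant $0.41385=(1-u)/2$ comes precisely from balancing these two pieces. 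You instead pass to the shifted semigroup and bound $MED(m,f)\leq\sum_{m'\leq m}N_{mul}(m',f-m)$, applying Backelin's refined bound at the \emph{smaller} Frobenius number $f-m$. This is legitimate (if $m<f-m$ then $m\in S'$ forces $m(S')\leq m$) and in fact sharper: your crude term $4\cdot 2^{(f-m)/2}$ only arises when the range $m'\geq (f-m)/4$ is nonempty, i.e.\ when $m\geq f/5$, where it is at most $4\cdot 2^{0.4f}$, while your Backelin contributions peak near $2^{0.37f}$, so your method gives $MED(f)=O(2^{0.4f})$, which implies the stated bound. Two points must be tightened for the write-up to stand: the instruction to maximise the exponent $(f-m)/2$ ``across $m\in[1,f/2]$'' has to be restricted to $m\geq f/5$ (taken literally over all $m\geq 1$ it reads as roughly $f/2$ and the argument collapses); and the constant $0.41385$ does not in fact fall out of your optimisation --- it is an artefact of the paper's different splitting point --- whereas your calculation, done carefully, produces the exponent $0.4$.
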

\begin{proof}
We start with the lower bound, let $m=\left\lceil\frac{f+1}{3}\right\rceil$ so that $f=3m-r$, $1\leq r\leq 3$. Let $f_1=f-m$ so that $f_1=2m-r<2m$. Now for a lower bound on the number of numerical semigroups that contain $m$ and have Frobenius number $f_1$, just consider the depth $2$ numerical semigroups among them. Therefore by Corollary \ref{f-m},
$$MED(f)\geq MED(m,f)\geq 2^{\overline{f_1}-1}\geq 2^{\frac{f_1}{2}-2}
=\frac{1}{4}2^{\frac{f}{2}-\frac{m}{2}}
>\frac{1}{4}2^{\frac{f}{2}-\frac{f+4}{6}}
=2^{-\frac{8}{3}}2^{\frac{f}{3}}.$$

Now we obtain the upper bound, let $u=0.1723$. By Corollary \ref{f-m} we know that
$$MED(f)
=\sum_{m=2}^{f+1} MED(m,f)
\leq 1+\sum_{2\leq m<uf}N_{mul}(m,f)+\sum_{uf\leq m\leq f-1} N(f-m)$$
$$<1+0.071\times 2^{\frac{f}{2}}\left(\frac{13}{16}\right)^{\frac{f}{8}}\sum_{2\leq m<uf} 2^{-0.628(\frac{f}{4}-m)}
+4\sum_{uf\leq m\leq f-1}2^{\frac{f-m}{2}}$$
$$\ll 2^{\frac{f}{2}}\left(\frac{13}{16}\right)^{\frac{f}{8}} 2^{-0.628(0.25-u)f}+ 2^{\frac{1-u}{2}f}$$
It should be numerically checked that $\frac{1-u}{2}=0.41385$ and
$$2^{\frac{1}{2}}\left(\frac{13}{16}\right)^{\frac{1}{8}} 2^{-0.628(0.25-u)} <2^{0.41385}.$$
The result follows.
\end{proof}
In particular this means that max embedding dimension numerical semigroups have density $0$.
$$\lim_{f\to\infty}\frac{MED(f)}{N(f)}=0.$$

\begin{conjecture}
The following limit exists
$$\lim_{f\to\infty}\frac{\text{log}_{2}(MED(f))}{f}.$$
\end{conjecture}

Numerically the limit seems to be close to $0.375$. The following graph is plotted based on Table \ref{tab:MED NS}.

\includegraphics[width=\textwidth]{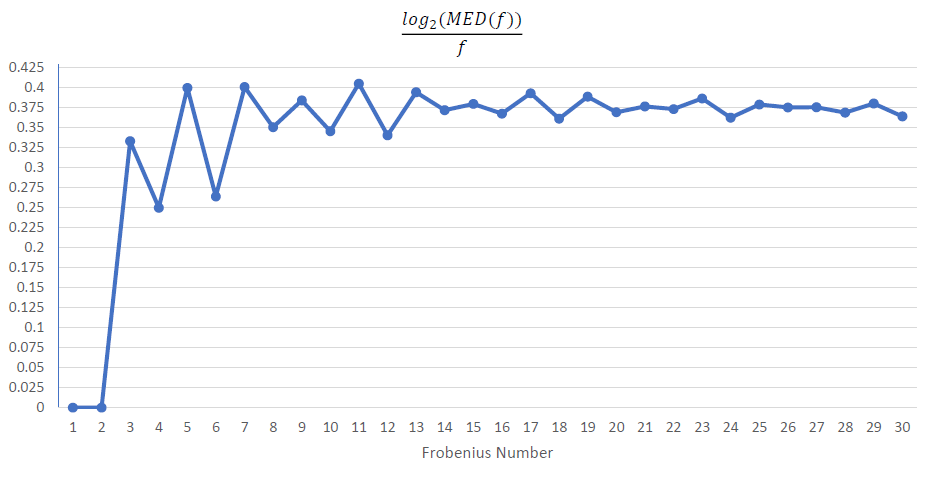}

\begin{table}[h]
    \centering
    \begin{tabular}{|c|c|c|c|c|c|c|c|c|c|}
    \hline
        $f$ & $MED(f)$& $f$ & $MED(f)$ &$f$ & $MED(f)$& $f$ & $MED(f)$ & $f$ & $MED(f)$\\
        \hline
        1 &1 &7 &7 &13 &35 &19 &168 &25 &715 \\
        2 &1 &8 &7 &14 &37 &20 &168 &26 &872 \\
        3 &2 &9 &11 &15 &52 &21 &241 &27 &1135 \\
        4 &2 &10 &11 &16 &59 &22 &298 &28 &1288 \\
        5 &4 &11 &22 &17 &103 &23 &477 &29 &2105 \\
        6 &3 &12 &17 &18 &91 &24 &418 &30 & 1949\\
        \hline
    \end{tabular}
    \caption{Number of Max ED numerical semigroups}
    \label{tab:MED NS}
\end{table}

\section{Counting by genus}\label{genus}

So far in this paper we have considered the numerical semigroups of a fixed Frobenius number, now we will consider those with a fixed genus.
In this case it is still true that most numerical semigroups have depth $2$ or $3$ as is proved in \cite{Zhai}.
\begin{theorem}\label{d 2,3}
$$\lim_{g\to\infty}\frac{\#\{S|g(S)=g, F(S)>3m(S)\}}{\phi^g}=0.$$
\end{theorem}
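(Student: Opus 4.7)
The plan is to invoke the statement directly from \cite{Zhai}, where this refinement arises as a byproduct of the analysis establishing $n_g\sim C\phi^g$. Here is the outline one would follow if reproving it from scratch.

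First, one encodes each numerical semigroup $S$ with multiplicity $m$ by its Apéry set $\{w_0=0,w_1,\dots,w_{m-1}\}$, where $w_i$ is the smallest element of $S$ lying in the residue class $i\pmod m$. Equivalently, work with the Kunz coordinates $a_i=\lceil w_i/m\rceil\in\mathbb{Z}_{\ge 1}$, which satisfy $g(S)=\sum_{i=1}^{m-1}(a_i-1)$ and whose largest entry is comparable to the depth $q(S)$. Thus the event $f(S)>3m(S)$ corresponds to having some Kunz coordinate of size at least $4$ (up to a boundary term depending on the residue class of $f$ modulo $m$).

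Second, one would stratify the count of genus-$g$ semigroups by depth and bound each stratum via a generating-function argument. For depth at most $3$, every Kunz coordinate is in $\{1,2,3\}$; summing the associated generating function over all admissible $m$ mimics the classical $\{1,2\}$-composition count that produces Fibonacci numbers, and already accounts for the full $\phi^g$ growth. For depth at least $4$, each admissible tuple has at least one coordinate $\ge 4$; the analogous generating-function analysis gives an exponential growth rate strictly less than $\phi$, yielding $\#\{S:g(S)=g,\,f(S)>3m(S)\}=O(\rho^g)$ for some $\rho<\phi$, and hence $o(\phi^g)$.

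The main obstacle is that not every Kunz tuple corresponds to a numerical semigroup: admissibility is encoded by the Kunz inequalities, and a naive unconstrained count would be far too crude, in particular because it would not respect the identity $n_g\sim C\phi^g$. Zhai circumvents this by analyzing the ancestor tree of numerical semigroups of Bras-Amor\'os \cite{Maria}---whose $g$-th level has size exactly $n_g$---and by isolating and recursively estimating the sub-tree generated by semigroups of depth $\ge 4$. The resulting recursion exhibits a strict exponential separation from $\phi$, which is the content of Theorem \ref{d 2,3}.
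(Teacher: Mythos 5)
Your proposal matches the paper exactly: the paper's entire proof of this theorem is the citation ``See \cite{Zhai}'', and you likewise invoke the result directly from \cite{Zhai}. Your additional sketch of Zhai's internal argument is not needed for (and not checked against) the paper, which treats this as an external black box.
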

\begin{proof}
See \cite{Zhai}.
\end{proof}

We will therefore concentrate on numerical semigroups of depth $2,3$ as we go on to prove Theorem \ref{F 2m genus result}.
Let $F_n$ denote the $n^{th}$ Fibonacci number, it is well known that
$$F_{n+1}=\sum_{k=0}^{\left\lfloor\frac{n}{2} \right\rfloor} {n-k\choose k}.$$

\begin{theorem}\label{2m-F=k}
Fix $k\geq 1$ and $g\geq k+1$, then the number of numerical semigroups with genus $g$ and satisfying $2m(S)-F(S)=k$ is $F_{g-k}$.
\end{theorem}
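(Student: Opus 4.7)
The plan is to exploit the observation that $2m(S) - F(S) = k \geq 1$ forces $F(S) < 2m(S)$, so $S$ has depth at most $2$. The depth-$1$ semigroup $\{0, F+1, \to\}$ satisfies $2m - F = F + 2$, which would give $g = F = k-2$; this is ruled out by $g \geq k+1$. Hence every semigroup counted in the theorem has depth exactly $2$.

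As in the proof of Theorem \ref{depth 2 count}, a depth-$2$ numerical semigroup with Frobenius number $f$ and multiplicity $m$ is precisely $\{0\} \cup T \cup \{f+1,\to\}$ where $T \subseteq [m, f-1]$ contains $m$; closure is automatic because $2m > f$. Requiring genus $g$ amounts to $|T| = f - g$, and since $m \in T$ is forced, the remaining $f - g - 1$ elements are chosen freely from $[m+1, f-1]$. With $f = 2m - k$, this gives exactly $\binom{m-k-1}{2m-k-g-1}$ semigroups for each admissible multiplicity $m$.

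Summing over $m$, the binomial coefficient is nonzero precisely when $\lceil (g+k+1)/2\rceil \leq m \leq g$, a range that is nonempty by the hypothesis $g \geq k+1$. Setting $i = m - k - 1$ and using the symmetry $\binom{i}{2i+k+1-g} = \binom{i}{g-k-1-i}$, followed by the reindexing $j = g-k-1-i$, the total count becomes
\[
\sum_{j=0}^{\lfloor (g-k-1)/2 \rfloor} \binom{g-k-1-j}{j}.
\]
Applying the Fibonacci identity $F_{n+1} = \sum_{j=0}^{\lfloor n/2 \rfloor} \binom{n-j}{j}$ with $n = g-k-1$ yields $F_{g-k}$, as claimed.

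The main obstacle is the bookkeeping in this final sum: one must verify that the admissible range for $m$, the two substitutions, and the parity of $g - k - 1$ all line up so as to reproduce the Fibonacci identity exactly, and that the upper endpoint $m = g$ corresponds to the $j = 0$ term $\binom{g-k-1}{0}$. Beyond this, the only ingredient required is the explicit description of depth-$2$ semigroups underlying Theorem \ref{depth 2 count}.
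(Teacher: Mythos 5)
Your proof is correct and follows essentially the same route as the paper: both classify the depth-$2$ semigroups by multiplicity $m$, observe that the elements of $[m+1,2m-k-1]$ can be chosen freely, and sum the resulting binomial coefficients via the identity $F_{n+1}=\sum_j\binom{n-j}{j}$; your index $j$ is exactly the paper's $R_0=g-m$, the number of gaps in that interval. Your explicit exclusion of the depth-$1$ case and the verification of the admissible range of $m$ are welcome details the paper leaves implicit.
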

\begin{proof}
If such a numerical semigroup has multiplicity $m$, then any number beyond $2m-k$ must be in $S$, $2m-k$ is not in $S$.
Let $R_0=|[m+1,2m-k-1]\cap Gap(S)|$ then $0\leq R_0\leq m-k-1$ and $g=m-1+R_0+1$ i.e. $g-(k+1)=m-k-1+R_0$.
Thus $R_0$ varies from $0$ to $\left\lfloor\frac{g-(k+1)}{2}\right\rfloor$.
Also for a particular $R_0$, the number of numerical semigroups satisfying the conditions is ${m-k-1\choose R_0}={g-(k+1)-R_0\choose R_0}$.
And hence the total number of such semigroups is
$$\sum_{R_0=0}^{\left\lfloor\frac{g-(k+1)}{2} \right\rfloor} {g-(k+1)-R_0\choose R_0}
=F_{g-k}.$$
\end{proof}

\begin{corollary}[From \cite{Nathan}, \cite{Zhao}]
The number of depth $2$ numerical semigroups with genus $g$ is $F_{g+1}-1$.
\end{corollary}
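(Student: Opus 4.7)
The plan is to deduce the corollary by partitioning the depth $2$ semigroups of genus $g$ according to the positive integer $k=2m(S)-F(S)$ and then invoking Theorem \ref{2m-F=k} for each value of $k$ separately. A numerical semigroup has depth at most $2$ exactly when $F(S)<2m(S)$, equivalently $k\geq 1$, so such a partition captures every depth $2$ semigroup exactly once.

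I would first check that depth $1$ (the single semigroup $\{0,F+1,F+2,\dots\}$) is automatically excluded from the summation: for that semigroup $m=F+1$ and $g=F$, so $k=2(F+1)-F=F+2=g+2$, which violates the hypothesis $g\geq k+1$ of Theorem \ref{2m-F=k}. Next I would pin down the range of valid $k$ for a depth $2$ semigroup of genus $g$. Since $m(S)\in S$ while $F(S)$ is a gap, we have $F(S)\geq m+1$, which rearranges to $k\leq m-1$. Moreover every element of $[1,m-1]$ is a gap and $F(S)>m-1$ is an additional gap, so $g\geq m\geq k+1$. Thus for each $k\in\{1,\dots,g-1\}$ Theorem \ref{2m-F=k} applies and delivers exactly $F_{g-k}$ semigroups.

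Summing over the allowed values of $k$ gives
\[ \sum_{k=1}^{g-1} F_{g-k} \;=\; \sum_{j=1}^{g-1} F_j \;=\; F_{g+1}-1, \]
where the last equality is the standard identity $\sum_{j=1}^{n}F_j=F_{n+2}-1$. There is no real obstacle: the only care required is bookkeeping the range of $k$ and confirming that depth $1$ contributes nothing, after which the Fibonacci telescoping finishes the argument in one line.
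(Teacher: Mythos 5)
Your proposal is correct and follows essentially the same route as the paper: partition the depth $2$ semigroups by $k=2m(S)-F(S)$, verify $1\leq k\leq m-1\leq g-1$, apply Theorem \ref{2m-F=k} to each class, and telescope the Fibonacci sum. The only difference is that you explicitly check the depth $1$ semigroup is excluded, which the paper leaves implicit in the bound $k\leq g-1$.
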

\begin{proof}
If a numerical semigroup $S$ of depth $2$ and genus $g$ then $2m(S)-F(S)\geq 1$ and $2m(S)-F(S)\leq m(S)-1\leq g-1$.
Therefore, by Theorem \ref{2m-F=k} the number of depth $2$ numerical semigroups of genus $g$ is
$$\sum_{k=1}^{g-1}F_{g-k}
=\sum_{i=1}^{g-1}F_{i}
=F_{g+1}-1.$$
\end{proof}

We can now prove one part of Theorem \ref{F 2m genus result}.
\begin{theorem}\label{part 1}
For any $\epsilon>0$ there is an $N$ such that for every $g$
$$\frac{\#\{S|g(S)=g, 2m(S)-F(S)>N\}}{\phi^g}<\epsilon.$$
\end{theorem}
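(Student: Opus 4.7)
The plan is to invoke Theorem~\ref{2m-F=k} for each $k > N$ and sum the resulting Fibonacci counts. The hypothesis $g \geq k+1$ of that theorem restricts us to $k \leq g-1$, so I would first handle the remaining range $k \geq g$ separately. The identity $F = n + g$ together with the universal bound $m \leq g+1$ and the assumption $2m - F \geq g$ pins down $n \leq 2$, and a brief case check for $n \in \{0,1,2\}$ shows that the only numerical semigroup of genus $g$ with $2m(S) - F(S) \geq g$ is the depth-$1$ semigroup $\{0, g+1, g+2, \dots\}$ (for which $2m - F = g + 2$); the cases $n = 1, 2$ are ruled out because they would force $m = g+1 = F$, contradicting $F \notin S$. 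So this tail regime contributes at most $1$ to our count.

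Next, I would sum the counts $F_{g-k}$ from Theorem~\ref{2m-F=k} over $N+1 \leq k \leq g-1$. Reindexing by $j = g - k$ converts this to $\sum_{j=1}^{g-N-1} F_j$, which telescopes via the standard identity $\sum_{j=1}^{n} F_j = F_{n+2} - 1$ to $F_{g-N+1} - 1$. Adding the $+1$ from the edge case yields
\begin{equation*}
\#\{S : g(S) = g,\ 2m(S) - F(S) > N\} \;\leq\; F_{g-N+1},
\end{equation*}
with the conventions that empty sums are $0$ and $F_n := 0$ for $n \leq 0$.

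The final step is to apply the elementary bound $F_n \leq \phi^n$ (immediate by induction from $\phi^2 = \phi + 1$) to conclude
\begin{equation*}
\frac{\#\{S : g(S) = g,\ 2m(S) - F(S) > N\}}{\phi^g} \;\leq\; \phi^{\,1 - N},
\end{equation*}
which is less than $\epsilon$ once $N > 1 + \log_{\phi}(1/\epsilon)$, uniformly in $g$. There is no substantive obstacle in this argument: the statement is essentially a direct corollary of Theorem~\ref{2m-F=k} combined with a classical Fibonacci inequality, and the only piece of genuine bookkeeping is the edge case $k \geq g$, which the analysis above shows contributes only a harmless additive $1$.
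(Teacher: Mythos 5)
Your proof is correct and follows essentially the same route as the paper's: sum the counts $F_{g-k}$ from Theorem~\ref{2m-F=k} over $k>N$, collapse the sum with the identity $\sum_{j\le n}F_j=F_{n+2}-1$, and bound the resulting Fibonacci number by a power of $\phi$ (the paper uses $F_n<\tfrac{2}{\sqrt5}\phi^n$ where you use $F_n\le\phi^n$, an immaterial difference). In fact you are slightly more careful than the paper, whose displayed equality silently omits the depth-$1$ semigroup with $2m-F=g+2>g-1$; your explicit treatment of the range $k\ge g$ shows this contributes only a single semigroup and is absorbed into the bound $F_{g-N+1}$.
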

\begin{proof}
By Theorem \ref{2m-F=k} it follows that
$$\frac{\#\{S|g(S)=g, 2m(S)-F(S)>N\}}{\phi^g}
=\frac{1}{\phi^g}\sum_{k=N+1}^{g-1} F_{g-k}$$
$$=\frac{1}{\phi^g}\sum_{k=1}^{g-N-1} F_{k}
<\frac{F_{g-N+1}}{\phi^g}
<\frac{2}{\sqrt{5}} \frac{\phi^{g-N+1}}{\phi^g}
=\frac{2\phi}{\sqrt{5}} \phi^{-N}.$$
\end{proof}

We now consider depth $3$ numerical semigroups. In \cite{Zhao} the following definitions are made,
$$A_k=\{A\subseteq [0,k-1]\mid 0\in A, k\not\in A+A\}.$$
Given a numerical semigroup $S$ of depth $3$, they defined the type of $S$ to be $(k;A)$, where $k=F(S)-2m(S)$ and $A=(S\cap [m,m+k])-m$.
So if $S$ has type $(k;A)$ then $A\in A_k$.

\begin{theorem}\label{F-2m=k}
If $A\in A_k$ then the number of numerical semigroups of genus $g$ and type $(k,A)$ is at most
$$F_{g-|(A+A)\cap [0,k]|+|A|-k-1}.$$
\end{theorem}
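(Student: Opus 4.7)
The plan is to partition the set of semigroups of type $(k,A)$ by multiplicity, since once $m$ is fixed the Frobenius number $f=2m+k$ is forced. For each $m$ I will first describe the structure of such a semigroup completely, extract a single binomial coefficient for the count at fixed genus $g$, and then sum over $m$ and recognise the answer as a Fibonacci number via the identity $F_{n+1}=\sum_{i}\binom{n-i}{i}$ already invoked in the paper.

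For the structure, by the definition of type, $S\cap[1,m+k-1]=m+A$, and $m+k\notin S$ (otherwise $m+(m+k)=f\in S$). Setting $B:=(A+A)\cap[0,k-1]$, which equals $(A+A)\cap[0,k]$ because $A\in A_k$ forces $k\notin A+A$, closure of $S$ under addition forces $2m+B\subseteq S\cap[m+k+1,f-1]$. Crucially, the remaining $(m-1)-|B|$ positions of $[m+k+1,f-1]$ are \emph{free}: any two elements of $[m+k+1,f-1]$ sum to at least $2(m+k+1)>f$, and the sum of such an element with an element of $m+A$ also exceeds $f$, so no choice of these positions can break closure. Decomposing $[1,f]$ into $[1,m-1]$, $[m,m+k-1]$, $\{m+k\}$, $[m+k+1,f-1]$, $\{f\}$ and counting gaps yields $g=2m+k-|A|-R$, where $R:=|S\cap[m+k+1,f-1]|$. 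Hence the number of semigroups of type $(k,A)$, genus $g$, and multiplicity $m$ is exactly $\binom{m-1-|B|}{R-|B|}=\binom{m-1-|B|}{2m+k-|A|-|B|-g}$.

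It then remains to sum this over $m$. The substitution $p=m-1-|B|$ rewrites the binomial as $\binom{p}{2p-c}$ with $c=g+|A|-|B|-k-2$, and reindexing $i=c-p$ gives $\sum_p\binom{p}{2p-c}=\sum_i\binom{c-i}{i}=F_{c+1}=F_{g-|B|+|A|-k-1}$, which is the target expression. The result is phrased as an upper bound because the Fibonacci sum runs over every nonnegative $p$, whereas depth~$3$ restricts us to $m\ge k+1$ (equivalently $p\ge k-|B|$); the handful of low-$p$ terms falling outside this range need not correspond to genuine depth-$3$ semigroups of type $(k,A)$ and must be discarded, and since those dropped binomial coefficients are nonnegative the inequality is preserved. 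The delicate step is the gap count: it is easy to forget the forced gap at $m+k$ or the gap at $f$ itself, and miscounting either shifts the resulting Fibonacci index by one and destroys the match with the claimed bound.
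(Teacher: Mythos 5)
Your proof is correct, and it is a genuine proof where the paper offers none: the paper's ``proof'' of Theorem \ref{F-2m=k} is simply a citation to Zhao. Your argument is the natural depth-$3$ analogue of the technique the paper itself uses for Theorem \ref{2m-F=k}: fix the multiplicity $m$ (which pins down $f=2m+k$), show that $S$ is determined by a free subset $T$ of $[m+k+1,f-1]\setminus(2m+B)$ with $B=(A+A)\cap[0,k-1]$, observe that the genus constraint fixes $|T|$, and then telescope the resulting binomial coefficients over $m$ into a Fibonacci number via $F_{n+1}=\sum_i\binom{n-i}{i}$. All the delicate points check out: the closure verification (any sum of two nonzero elements lands either in $2m+B$, beyond $f$, or is ruled out by $k\notin A+A$, which also guarantees $f\notin S$), the gap count $g=2m+k-|A|-|B|-|T|$ including the forced gaps at $m+k$ and at $f$, and the explanation of why the statement is only an upper bound (the unrestricted Fibonacci sum includes indices $p<k-|B|$, i.e.\ $m\le k$, which do not correspond to depth-$3$ semigroups of type $(k,A)$, and the discarded terms are nonnegative). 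This self-contained argument is arguably an improvement on the paper's bare citation, and it makes transparent exactly where the slack in the inequality comes from.
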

\begin{proof}
See \cite{Zhao}.
\end{proof}

\begin{theorem}\label{sum converges}
The following sum converges
$$\sum_{k}^{\infty}\sum_{A\in A_k}\phi^{-|(A+A)\cap [0,k]|+|A|-k-1}.$$
\end{theorem}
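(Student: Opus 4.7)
My approach is to deduce convergence from the asymptotic bound $n_g=O(\phi^g)$ of \cite{Zhai}, combined with a sharpening of Theorem \ref{F-2m=k} to an equality (for $g$ large relative to $k$) and a truncation in $k$. The Fibonacci asymptotic $F_n\sim\phi^n/\sqrt{5}$ then transfers the bound on $n_g$ into a bound on the truncated series, uniformly in the truncation parameter.

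\textbf{Equality in Theorem \ref{F-2m=k}.} Fix $A\in A_k$ and an admissible multiplicity $m\ge k+1$. A depth-$3$ numerical semigroup $S$ of type $(k,A)$ with multiplicity $m$ has $F=2m+k$, $S\cap[1,m-1]=\emptyset$, and $S\cap[m,m+k]=m+A$. The elements of $[m+k+1,F-1]$ that are forced to lie in $S$ by closure are precisely $\{2m+a_1+a_2\mid a_1,a_2\in A,\,a_1+a_2\le k-1\}$, which has cardinality $|(A+A)\cap[0,k]|=|A|+\delta$ with $\delta:=|(A+A)\cap[0,k]|-|A|$; here we use $k\notin A+A$. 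The remaining $m-1-|A|-\delta$ slots can be included or omitted independently, since any two chosen slots have sum greater than $F$. Letting $|T|$ denote the number of included slots, counting gaps gives $g=2m+k-2|A|-\delta-|T|$. Summing $\binom{m-1-|A|-\delta}{|T|}$ over $m$ and reindexing yields $\sum_i\binom{g-\delta-k-2-i}{i}=F_{g-\delta-k-1}$, via the identity $F_{n+1}=\sum_i\binom{n-i}{i}$, provided $g$ exceeds a threshold depending on $k$ and $A$ beyond which the constraint $m\ge k+1$ no longer truncates the sum.

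\textbf{Truncation and conclusion.} By \cite{Zhai} there is a constant $C$ with $n_g\le C\phi^g$ for every $g$. For any $K$ and all sufficiently large $g$,
$$\sum_{k\le K}\sum_{A\in A_k}F_{g-\delta(A)-k-1}=\sum_{k\le K}\sum_{A\in A_k}\#\{S\mid g(S)=g,\,\mathrm{type}(S)=(k,A)\}\le n_g\le C\phi^g.$$
Divide by $\phi^g$. Since $F_n/\phi^n\to 1/\sqrt{5}$, each term of the finite left-hand sum converges as $g\to\infty$, yielding
$$\sum_{k\le K}\sum_{A\in A_k}\phi^{-|(A+A)\cap[0,k]|+|A|-k-1}\le C\sqrt{5}.$$
Since this bound is uniform in $K$, letting $K\to\infty$ proves convergence of the full series.

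\textbf{Main obstacle.} The main obstacle is justifying the equality in Theorem \ref{F-2m=k}: the theorem as quoted from \cite{Zhao} is only an upper bound, whereas the argument requires equality. The direct enumeration of forced and free slots above resolves this.
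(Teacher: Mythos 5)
The paper does not actually prove this statement; it defers entirely to \cite{Zhai} and \cite{Zhao}, where the convergence is established by direct combinatorial estimates on the sets $A_k$ (controlling how many $A\in A_k$ can have a given value of the ``defect'' $\delta=|(A+A)\cap[0,k]|-|A|$, so that $\sum_{A\in A_k}\phi^{-\delta}$ grows strictly slower than $\phi^{k}$). That said, your internal computations are essentially sound: the sharpening of Theorem \ref{F-2m=k} to the exact count $F_{g-\delta-k-1}$ once $g$ is large relative to $k$ is correct (it is Zhao's enumeration, of which the quoted theorem is the upper-bound version), the types are disjoint, and the passage from uniformly bounded partial sums to convergence is fine.

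The genuine problem is circularity. Your whole argument rests on $n_g\le C\phi^g$, quoted from \cite{Zhai}. But in the literature that upper bound is itself obtained from the convergence of the very sum you are trying to prove convergent: the depth-$3$ semigroups dominate $n_g$, their number is bounded type by type via $F_{g-\delta-k-1}$, and one needs $\sum_{k}\sum_{A\in A_k}\phi^{-\delta-k-1}<\infty$ to conclude that the total is $O(\phi^g)$. Indeed, your own exact count shows the two statements are essentially equivalent, so deducing one from the other adds no information. As a formal deduction from a black-box citation the steps are valid, but it cannot stand as a proof of Theorem \ref{sum converges}: unwinding the citation reveals that the conclusion is an ingredient of the hypothesis. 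A genuine proof must attack the sum directly, for instance by bounding $\sum_{A\in A_k}\phi^{|A|-|(A+A)\cap[0,k]|}$ by $C\lambda^{k}$ for some $\lambda<\phi$, which is what Zhao and Zhai do.
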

\begin{proof}
See \cite{Zhai}, \cite{Zhao}.
\end{proof}

\begin{theorem}\label{part 2}
For any $\epsilon>0$ there is an $N$ such that for every $g$
$$\frac{\#\{S|g(S)=g, m(S)>F(S)-2m(S)>N\}}{\phi^g}<\epsilon.$$
\end{theorem}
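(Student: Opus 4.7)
The plan is to reduce the count to a direct application of Theorem~\ref{F-2m=k} combined with the convergence established in Theorem~\ref{sum converges}. First I would observe that the condition $m(S) > F(S) - 2m(S) > N$ is equivalent to saying that $S$ has depth~$3$ with $k := F(S) - 2m(S) > N$: depth~$3$ means $\tfrac{f}{3} < m \leq \overline{f}$, which rearranges to $0 < F - 2m < m$. Hence every $S$ contributing to the numerator has a well-defined type $(k, A)$ with $A \in A_k$ in the sense of \cite{Zhao}, and these types are distinct for distinct $S$.

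Summing the estimate of Theorem~\ref{F-2m=k} over all $k > N$ and all $A \in A_k$ gives
$$\#\{S : g(S)=g,\ m(S) > F(S)-2m(S) > N\} \leq \sum_{k > N} \sum_{A \in A_k} F_{g - |(A+A)\cap[0,k]| + |A| - k - 1}.$$
Applying the Binet-type bound $F_n \leq \tfrac{2}{\sqrt{5}}\phi^{n}$ (used already in the proof of Theorem~\ref{part 1}) and dividing by $\phi^{g}$, the exponent of $\phi$ shifts from the subscript on the Fibonacci number to the same quantity, and we obtain
$$\frac{\#\{S : g(S)=g,\ m(S) > F(S)-2m(S) > N\}}{\phi^g} \leq \frac{2}{\sqrt{5}}\sum_{k > N} \sum_{A \in A_k} \phi^{-|(A+A)\cap[0,k]| + |A| - k - 1}.$$

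Finally I invoke Theorem~\ref{sum converges}, which asserts that the same double series starting from $k = 1$ converges. Therefore its tail from $k = N+1$ onward tends to $0$ as $N \to \infty$, and choosing $N$ so that $\tfrac{2}{\sqrt{5}}$ times the tail is less than $\epsilon$ completes the proof.

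The main obstacle is not analytic but bookkeeping. One must verify that the range of permissible $k$ for a depth-$3$ numerical semigroup is precisely $1 \leq k < m$, so the condition $m(S) > F(S) - 2m(S) > N$ lines up exactly with the sum over $k > N$ without losing any cases; and one should make sure the Fibonacci bound is applied uniformly over all $(k, A)$, in particular so that potentially negative values of $g - |(A+A)\cap[0,k]| + |A| - k - 1$ for small $g$ do not cause issues (for any fixed $N$ these only occur for finitely many $g$, which can be absorbed by adjusting $N$). Once these compatibilities are noted, the heavy lifting has been done in \cite{Zhao} and \cite{Zhai} and the theorem follows as a tail estimate.
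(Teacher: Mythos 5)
Your proposal is correct and follows essentially the same route as the paper: bound the count via Theorem \ref{F-2m=k}, apply the Binet-type estimate $F_n < \frac{2}{\sqrt{5}}\phi^n$, and choose $N$ so that the tail of the convergent double series from Theorem \ref{sum converges} is below $\epsilon$. The extra bookkeeping you flag (the depth-$3$/type correspondence and small-index Fibonacci terms) is sensible but does not change the argument, which matches the paper's proof.
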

\begin{proof}
By Theorem \ref{F-2m=k} we have
$$\frac{\#\{S|g(S)=g, m(S)>F(S)-2m(S)>N\}}{\phi^g}$$
$$\leq \sum_{k=N+1}^{\infty}\sum_{A\in A_k}\frac{F_{g-|(A+A)\cap [0,k]|+|A|-k-1}}{\phi^g}
<\frac{2}{\sqrt{5}} \sum_{k=N+1}^{\infty}\sum_{A\in A_k} \phi^{-|(A+A)\cap [0,k]|+|A|-k-1}.$$
Therefore we can pick a sufficiently large $N$ by Theorem \ref{sum converges}.
\end{proof}

Theorem \ref{F 2m genus result} is therefore proved by combining Theorem \ref{part 1} and Theorem \ref{part 2}.

\section{Acknowledgements}
I would like to thank Dr Nathan Kaplan for helpful discussions about this project.

\end{document}